\documentclass[12pt]{article}
\usepackage{amsfonts,amssymb,amsthm,amsmath,latexsym,mathtools, bm}
\usepackage{cite}
\usepackage{hyperref}
\usepackage{array}
\usepackage{breqn}
\usepackage{color}
\usepackage{fancyhdr}
\usepackage{enumitem}
\usepackage{dsfont}
\usepackage{caption}
\usepackage{subcaption}
\usepackage{booktabs}
\usepackage{float}
\usepackage {extarrows}
\usepackage{verbatim}
\usepackage{arydshln}
\usepackage[margin=1.2in]{geometry}

\usepackage{youngtab}
\usepackage{ytableau}
\ytableausetup
{mathmode, boxsize=1.5em}

\usepackage{tikz}
\usetikzlibrary{cd}

\usepackage{times}

\allowdisplaybreaks
\def\qed{\nopagebreak\hfill{\rule{4pt}{7pt}}}
\def\proof{\noindent {\it{Proof.} \hskip 2pt}}
\newtheorem{theorem}{Theorem}[section]
\newtheorem{definition}[theorem]{Definition}
\newtheorem{lemma}[theorem]{Lemma}

\newtheorem{conjecture}[theorem]{Conjecture} 
\numberwithin{equation}{section}

\def\CAsc{{\rm CAsc}}
\def\casc{{\rm casc}}
\def\CDes{{\rm CDes}}
\def\cdes{{\rm cdes}}
\def\Des{{\rm Des}}
\def\des{{\rm des}}

\newcommand{\scrs}{{\mathcal{S}}}
\newcommand{\scrb}{{\mathcal{B}}}
\newcommand{\scrv}{{\mathcal{V}}}
\newcommand{\scre}{{\mathcal{E}}}
\newcommand{\stirlingsubset}[2]{\genfrac{\{}{\}}{0pt}{}{#1}{#2}}
\newcommand{\stirlingcycle}[2]{\genfrac{[}{]}{0pt}{}{#1}{#2}}

\newcommand{\euler}[2]{\genfrac{\langle}{\rangle}{0pt}{}{#1}{#2}}
\newcommand{\eulersecond}[2]{\left\langle\!\euler{#1}{#2} \!\right\rangle}

\begin{document}

\begin{center}
{\Large\bf The Higher-order Stirling Triangles}
 
William Y.C. Chen$^1$ and Elena L. Wang$^2$

$^1$Center for Applied Mathematics, KL-AAGDM\\
Tianjin University\\
Tianjin 300072, P.R. China

$^2$Chern Institute of Mathematics and LPMC\\
Nankai University\\
Tianjin 300071, P.R. China

\vskip 3mm

Emails: { $^1$chenyc@tju.edu.cn, $^{2}$ling\_wang2000@tju.edu.cn}

\end{center}

\begin{abstract}
    The $r$th-order Stirling cycle and subset triangles and their associated quasi-Eulerian triangles were introduced by Deb and Sokal in their study of total positivity of combinatorial triangles. They found combinatorial interpretations for the cycle case in terms of Stirling permutations, leaving the subset case open. For $r\ge 2$, we resolve this problem by introducing the notion of Stirling subset permutations along with a  consecutive-descent statistic. We also prove the conjectures of Deb and Sokal on the row log-concavity of the higher-order Stirling cycle and subset triangles. Our log-concavity proofs rely on Sagan's criterion, Dey's extension, and strengthened log-concavity inequalities discovered with the assistance of ChatGPT 5.6. 
\end{abstract}

\noindent{\bf Keywords:} Higher-order Stirling numbers, quasi-Eulerian numbers, Ward numbers, Stirling permutations, real-rootedness, log-concavity.

\noindent{\bf AMS Classification:} 05A05, 
05A18, 
05A20, 11B73. 

\section{Introduction}
\label{sec:intro}

Many classical triangular arrays in enumerative combinatorics 
exhibit total positivity properties; see, for example, the
binomial~\cite{Aigner-Ziegler-2018}, the Stirling~\cite{Brenti-1995,Chen-Deb-2021,Harper-1967,Sokal-2023,Sokal-Zeng-2022}, and the Eulerian triangles~\cite{Brenti-1989,Brenti-1996,Chen-Deb-2021,Frobenius-1910,Petreolle-etal-2023, Sokal-Zeng-2022}. Motivated by these phenomena, Deb and Sokal~\cite{Deb-Sokal} introduced, for
 $r\ge1$, the $r$th-order Stirling cycle and subset triangles,
obtained by lifting
the columns of the $r$-associated Stirling triangles. Following the terminology in~\cite{Deb-Sokal,Graham-etal}, we refer to their entries as the $r$th-order Stirling cycle numbers and the $r$th-order Stirling subset numbers, respectively. We study combinatorial interpretations of the higher-order
Stirling triangles and their associated quasi-Eulerian triangles.
For the Stirling triangles, we also investigate row log-concavity
and the real-rootedness of the row-generating polynomials.

Recall that the $r$-associated Stirling cycle number
$\stirlingcycle{n}{k}_r$ counts permutations of
$[n]=\{1,2,\ldots,n\}$ with $k$ cycles, each of size at least $r$,
whereas the $r$-associated Stirling subset number
$\stirlingsubset{n}{k}_r$ counts partitions of $[n]$ into $k$ blocks,
each of size at least $r$. Introduced by Comtet~\cite{Comtet_74}, 
these numbers are also called the $r$-associated Stirling numbers of the
first and second kinds, respectively. 
In the literature, $\stirlingsubset{n}{k}_r$ is often denoted by
$S_r(n,k)$, and the signed version of $\stirlingcycle{n}{k}_r$ by
$s_r(n,k)$; see~\cite{Comtet_74}.

For $r\ge2$, each of the matrices
$(\stirlingcycle{n}{k}_r)_{n,\,k\ge0}$ and
$(\stirlingsubset{n}{k}_r)_{n,\,k\ge0}$
contains zero entries on its main diagonal. Deb and Sokal~\cite{Deb-Sokal} then considered the infinite
matrices with nonzero diagonal entries by lifting the columns. More precisely, for $r \ge 1$,
they defined the $r$th-order Stirling
cycle and subset numbers by
\begin{align}
\stirlingcycle{n}{k}^{\!(r)}
=
\stirlingcycle{n+(r-1)k}{k}_r,
\qquad
\stirlingsubset{n}{k}^{\!(r)}
=
\stirlingsubset{n+(r-1)k}{k}_r.
\end{align}
The resulting $r$th-order Stirling cycle and subset triangles are
\begin{align*}
C^{(r)}
=
\left(\stirlingcycle{n}{k}^{\!(r)}\right)_{n,\,k\ge0},
\qquad
S^{(r)}
=
\left(\stirlingsubset{n}{k}^{\!(r)}\right)_{n,\,k\ge0},
\end{align*}
with the zeroth row understood to be $(1,0,0,\ldots)$.

For $r=1$, $C^{(1)}$ and $S^{(1)}$ 
reduce to the ordinary Stirling cycle and subset triangles.
For $r=2$, the entries of $C^{(2)}$ are 
called the Ward cycle numbers; see OEIS sequences A259456 and A269940~\cite{OEIS}, whereas the entries of $S^{(2)}$
are known as the Ward numbers or Ward set numbers; see OEIS sequences A134991 and A269939~\cite{OEIS}. These Ward set numbers were studied by Jordan~\cite{Jordan-1933} and Ward~\cite{Ward-1934}
in connection with representations of Stirling numbers as sums
of factorials, and have since appeared in a variety of
contexts~\cite{Barbero-etal, Carlitz-1971, Clark_99, Steel, Price-Sokal-2020, Deb-Sokal, Wang-Xin}.  Write the row-generating polynomials of $C^{(r)}$ and $S^{(r)}$ as 
\begin{align*}
c_{r,\,n}(x)
=\sum_{k=0}^{n}\stirlingcycle{n}{k}^{\!(r)}x^k, \qquad
s_{r,\,n}(x)
=\sum_{k=0}^{n}\stirlingsubset{n}{k}^{\!(r)}x^k. 
\end{align*}

Building on the 
$r$th-order Stirling cycle and subset
numbers, 
Deb and Sokal~\cite{Deb-Sokal} introduced 
the $r$th-order quasi-Eulerian cycle
and subset triangles, 
\begin{align*}
     Q_{\rm C}^{(r)} = \left(\eulersecond{n}{k}^{\!(r)}_{\!\rm C}\right)_{n,\,k\ge 0},
\qquad Q_{\rm S}^{(r)}=\left(\eulersecond{n}{k}^{\!(r)}_{\!\rm S}\right)_{n,\,k\ge 0},
\end{align*} 
where the entries are defined via a binomial transformation: for $n \ge 0$ and $0 \le j \le n$, 
\begin{align}
\stirlingcycle{n}{n-j}^{(r)}
&=\sum_{i=j}^{n}
\eulersecond{n}{i}^{\!(r)}_{\!\rm C}\binom{i}{j},
\label{eq:quasi-Eulerian.cycle}\\[6pt]
\stirlingsubset{n}{n-j}^{(r)}
&=\sum_{i=j}^{n}
\eulersecond{n}{i}^{\!(r)}_{\!\rm S}\binom{i}{j}.
\label{eq:quasi-Eulerian.subset}
\end{align}

As pointed out by Deb and Sokal, the terminology of the $r$th-order quasi-Eulerian cycle and subset triangles is motivated by their connection to the second-order Eulerian triangles,
which can be understood as a relationship between
the Ward numbers and the second-order Eulerian numbers.

Recall that the second-order Eulerian numbers $\eulersecond{n}{k}$ 
are concerned with the enumeration of Stirling permutations of $[n]_2$, where $[n]_2=\{1,1,2,2,\ldots,n,n\}$. 
The notion of Stirling permutations was introduced by Gessel and
Stanley~\cite{Gessel-Stanley}. A Stirling permutation of $[n]_2$
is defined as a permutation of $[n]_2$ such that all elements between
two occurrences of $i$ are greater than $i$. Notice that a Stirling
permutation on a multiset is defined to be a permutation 
for which the elements between two occurrences of $i$ are at least $i$. 
For a Stirling permutation $\sigma=\sigma_1\sigma_2\cdots\sigma_{2n}$, an index $i\in[2n-1]$ is called an ascent if
$\sigma_i<\sigma_{i+1}$, and a descent if
$\sigma_i>\sigma_{i+1}$. With this convention, $\eulersecond{n}{k}$ is the number of
Stirling permutations of $[n]_2$ with $k$ ascents (or $k$ descents). Throughout this paper, ascents and descents are counted only at internal positions and we do not patch zeros at the beginning or at the end of a permutation. The $r$th-order Eulerian numbers count Stirling permutations on $[n]_r=\{1^r,2^r,\ldots,n^r\}$ by their number of ascents.

For $r=1$, both quasi-Eulerian triangles $Q_{\rm C}^{(1)}$ and $Q_{\rm S}^{(1)}$ contain negative entries. 
For $r=2$, however, the quasi-Eulerian cycle triangle coincides with the
second-order Eulerian triangle, while the quasi-Eulerian
subset triangle is a shifted reversal:
\begin{align}
\eulersecond{n}{k}^{\!(2)}_{\!\rm C}&=\eulersecond{n}{k},
\label{cycle.r2}\\[9pt]
\eulersecond{n}{k}^{\!(2)}_{\!\rm S}&=
\eulersecond{n}{n-k-1},
\label{subset.r2}
\end{align}
with the zeroth row understood to be $(1,0,0,\ldots)$.
 An equivalent form of \eqref{cycle.r2}  can be found in \cite[A106828/A358622]{OEIS}, and the relation \eqref{subset.r2} is also known; see  \cite{Smiley,Barbero-etal}. Notice that \eqref{subset.r2} 
 can be rewritten as follows:
\begin{align}
W(n,k)
=
\sum_{i=0}^{n-1}
\eulersecond{n}{i}\binom{n-i-1}{n-k},
\label{eq.Ward}
\end{align}
where $n\ge1$, $k\ge 0$,  and $W(n,k)$ is the Ward set number, 
namely, the number of partitions of $[n+k]$ with $k$ blocks such that
each block contains at least $2$ elements.
Deb and Sokal~\cite{Deb-Sokal} gave
algebraic proofs of both
\eqref{cycle.r2} and \eqref{subset.r2} along with a combinatorial proof of \eqref{cycle.r2}. 

For $r\ge2$, via computations, Deb and Sokal~\cite{Deb-Sokal} 
observed that for both the cycle and subset cases,
the $r$th-order quasi-Eulerian entries are nonnegative. They provided combinatorial interpretations
for the cycle case for $r \ge 2$, which ensures their nonnegativity.
In fact, their combinatorial structure stems from the objects defined by them to understand the $r$th-order Stirling cycle triangle. More precisely, both the Stirling cycle numbers and the associated
quasi-Eulerian cycle numbers are interpreted in terms of Stirling permutations of a multiset
$M_n^{(r)}$ and a consecutive-ascent statistic.
For $r\ge2$ and $n\ge1$, the multiset is defined by
\begin{align*}
M_n^{(r)}
=
\{1^{a_1},2^{a_2},\ldots,((r-1)n)^{a_{(r-1)n}}\},
\end{align*}
where the multiplicity $a_i$ of $i$ is given by
\begin{align*}
a_i=
\begin{cases}
2, & \text{if } i\equiv0\pmod{r-1},\\
1, & \text{otherwise}.
\end{cases}
\end{align*}

As remarked by Deb and Sokal~\cite[Section~4.4]{Deb-Sokal}, 
it is natural to wonder whether
there is a combinatorial interpretation of 
the $r$th-order Stirling subset triangle analogous to the cycle case. They also left open the problem of finding a combinatorial interpretation of the $r$th-order quasi-Eulerian subset triangle~\cite[Section~1.4]{Deb-Sokal}.

For $r \ge 2$, we answer both questions by introducing a class of Stirling permutations on $M_n^{(r)}$, called
Stirling subset permutations, together with a consecutive-descent statistic. 
The $(n,k)$-entry of the $r$th-order Stirling subset triangle
is a weighted count of these Stirling subset permutations of $M_n^{(r)}$: a permutation
with $i$ consecutive descents is endowed with the weight
$\binom{i}{n-k}$, the number of ways to choose $n-k$ of these consecutive descents.
The $(n,k)$-entry of the associated quasi-Eulerian subset triangle equals the number of Stirling subset permutations of $M_n^{(r)}$ with $k$ consecutive descents.

When $r=2$, our interpretation of the second-order Stirling subset numbers reduces to the combinatorial interpretation of the Ward numbers
$W(n,k)$ in terms of a weighted 
counting of Stirling permutations of $[n]_2$; see also Barbero, Salas and
Villase\~nor~\cite{Barbero-etal}. Moreover, our interpretation of the second-order quasi-Eulerian subset triangle gives a combinatorial interpretation of~\eqref{subset.r2}.

The second part of this paper is devoted to several conjectures of Deb and Sokal~\cite{Deb-Sokal}
on the real-rootedness of the row-generating polynomials and
the row log-concavity of $r$th-order Stirling cycle and subset
triangles. 

Here and throughout, a polynomial with real coefficients is said to be negative-real-rooted if it is either identically zero or all its  zeros lie in $(-\infty, 0]$. For $r=1$, negative-real-rootedness is immediate in the
cycle case, while the subset case was proven by Harper~\cite{Harper-1967}. For $r=2$, the second-order Stirling subset polynomials are the Ward polynomials, which have only real zeros; see Clark~\cite[Lemma~1]{Clark_99}.
 The real-rootedness of the second-order cycle polynomials
 was established by Deb and Sokal~\cite[Theorem~1.6 and Section~6]{Deb-Sokal}.

For higher orders, Deb and Sokal~\cite[Section~7.1]{Deb-Sokal}  proved
the non-real-rootedness of $c_{r,\,n}(x)$ whenever $r\ge3$ and
$n\ge3$, and of $s_{r,\,n}(x)$ whenever $r\ge4$ and $n\ge3$.
They~\cite[Conjecture~1.4(c) and Section~7.1]{Deb-Sokal} conjectured that $s_{3,\,n}(x)$ has non-real zeros for $n \ge 4$. 
We confirm this assertion in Theorem~\ref{theorem:S3-nonreal}.

We also settle the row log-concavity conjectures of Deb and Sokal and obtain a further result for the sixth-order Stirling subset triangle. 
Although real-rootedness fails to hold for higher orders in general, Deb and Sokal~\cite[Conjecture~1.3(c) and Conjecture~1.4(c)]{Deb-Sokal} conjectured that the rows of both triangles remain log-concave
for $3\le r\le5$. The conjecture for the
cycle case was posed by Deb at a conference; see~\cite{Cameron}. We prove these conjectures for both families: every row of
$C^{(r)}$ and every row of 
$S^{(r)}$ is log-concave for $1\le r\le5$. Furthermore, we show that for $n\ge23$, the $n$th row of $S^{(6)}$ is log-concave.

The rest of the paper is organized as follows.
In Section~\ref{sec.interpretation}, we give an overview of
the interpretations of Deb and Sokal for the cycle case and present the corresponding interpretations for the subset case.
In Section~\ref{sec:row-log-concavity}, we prove row log-concavity for both Stirling families up to order five. We also show that every row with $n \ge 23$ in the sixth-order subset triangle is log-concave. In Section~\ref{sec:S3-nonreal}, we focus on the non-real-rootedness of the third-order
Stirling subset polynomials.
We conclude with a conjecture on row log-concavity and log-convexity in higher orders.

\section{Combinatorial interpretations}
\label{sec.interpretation}

In this section, we first recall the combinatorial interpretations of Deb and Sokal~\cite{Deb-Sokal} for the
$r$th-order Stirling cycle triangle and the
$r$th-order quasi-Eulerian cycle triangle for $r \ge 2$. 
We then answer the questions of Deb and Sokal concerning combinatorial interpretations for the subset case by introducing Stirling subset
permutations and consecutive descents.

The recurrence relations for the
$r$-associated Stirling numbers can be reformulated into the
following
recurrences~\cite{Comtet_74,Deb-Sokal} for the $r$th-order
Stirling cycle and subset numbers with $r \ge 1$. For $n \ge 1$ and $0 \le k \le n$,
\begin{align}
&\begin{aligned}
   \stirlingcycle{n}{k}^{\!(r)} &=
   (r-1)! \, \binom{n+(r-1)k-1}{r-1} \, \stirlingcycle{n-1}{k-1}^{(r)} \\[6pt]
        & \qquad \qquad +
       \big(n+(r-1)k-1\big)\, \stirlingcycle{n-1}{k}^{(r)},
   \label{rec.cycle}
   \end{aligned}
   \\[6pt]
   &\begin{aligned}
   \stirlingsubset{n}{k}^{\!(r)}
   &=
   \binom{n+(r-1)k-1}{r-1} \, \stirlingsubset{n-1}{k-1}^{(r)} 
         +  k\, \stirlingsubset{n-1}{k}^{(r)},
   \label{rec.subset}
   \end{aligned}
\end{align}
where 
\begin{align*}
    \stirlingcycle{0}{k}^{(r)}=\stirlingsubset{0}{k}^{(r)} = \delta_{k,\,0}, \qquad \stirlingcycle{n}{-1}^{(r)}=\stirlingsubset{n}{-1}^{(r)} = 0.
\end{align*}   

\subsection{The Deb--Sokal construction for the cycle case}
\label{sec.interpretation.cycle}

 For $r \ge 2$,  Deb and Sokal \cite{Deb-Sokal} 
   provided an interpretation for the 
$r$th-order Stirling cycle numbers $\stirlingcycle{n}{k}^{\!(r)}$
in terms of a weighted counting
of Stirling permutations with respect to the
number of consecutive ascents.   They also
provided an interpretation for the $r$th-order quasi-Eulerian cycle numbers $\eulersecond{n}{k}^{\! (r)}_{\!\rm C}$ in the same setting.
 The proof of the case for the second-order Stirling cycle numbers is due to Callan; see \cite[pp. 16--17]{Deb-Sokal}.  

Recall that the multiset introduced by Deb and Sokal is given by
\begin{align*}
M_{n}^{(r)} = \{1^{a_1}, 2^{a_2}, \ldots, ((r-1)n)^{a_{(r-1)n}}\},
\end{align*}
where $a_i$ is the multiplicity of the element $i$, defined by
\begin{align*}
a_i=
\begin{cases}
2, \qquad \text{if $i\equiv 0 \pmod{r-1}$;}\\
1, \qquad \text{otherwise.}
\end{cases}
\end{align*}
In particular, for $r=2$,  
\[ M_n^{(2)} =  [n]_2=\{1,1,2,2,3,3,\ldots, n,n\}\] 
and for $r=3$, 
\[ M_n^{(3)}=\{1,2,2,3,4,4,\ldots,2n-1 ,2n,2n\}.\]

Let $\scrs_{n}^{(r)}$ denote the set of Stirling permutations of the multiset $M_{n}^{(r)}$. For $n=0$, assume that $M_0^{(r)}=\varnothing$ and $\scrs_0^{(r)}$ consists of the empty word, with no consecutive ascents. Deb and Sokal~\cite{Deb-Sokal} introduced a new combinatorial statistic on $\scrs_{n}^{(r)}$.        

\begin{definition}[Consecutive ascent]
Given $\sigma \in \scrs_n^{(r)}$, a segment $\sigma_j \sigma_{j+1} \cdots \sigma_{j+r-1}$ is called a consecutive ascent if $\sigma_{j}<\sigma_{j+1}$
and  $\sigma_{j+1} \cdots \sigma_{j+r-1}$ is an increasing sequence
of consecutive integers from $ (r-1)i+1$ to $(r-1)i+(r-1)$ 
        for some $ 1 \leq i \le n-1$. 
\end{definition}        

It can be seen that for each consecutive ascent, the integer $i$ in the definition is uniquely determined and distinct consecutive ascents have distinct indices. Thus we can represent the set of consecutive ascents by the indices, as used in \cite{Deb-Sokal}.  We call the integer $i$ the index of the consecutive ascent. 
Let $\CAsc(\sigma) \subseteq \{1,2,\ldots,n-1\}$ denote the set of indices of consecutive ascents of $\sigma$, and let  $\casc(\sigma)$ denote its cardinality.

For example, when $r=3$ and $n=3$, the Stirling permutation $\sigma =221344566$ of $M_3^{(3)} =\{1,2,2,3,4,4,5,6,6\}$ has $2$ consecutive ascents, $134$ and $456$, corresponding to the indices $1$ and $2$.

Note that it is readily seen from the property of Stirling permutations that the above description
of a consecutive ascent is equivalent to the definition in Deb and Sokal~\cite{Deb-Sokal}, whether it is specified by its index or by the corresponding segment.  

Put
\begin{align}
\scrs_{n,\,k}^{(r)}
=
\{\sigma \in \scrs_{n}^{(r)} \mid  \casc(\sigma)= k\}.
\end{align}
Let $\scrv_{n,\,k}^{(r)}$ denote the set of pairs
$(\sigma,D)$ such that $\sigma\in\scrs_n^{(r)}$,
$D\subseteq\CAsc(\sigma)$ and $|D|=k$. 
In the terminology of Deb and Sokal \cite{Deb-Sokal},
these pairs are called consecutive-ascent-marked Stirling permutations. 

\begin{theorem}[Deb--Sokal]
\label{thm.combi.interpret.rth}
Given $r \ge 2$, the $r$th-order quasi-Eulerian cycle number $\eulersecond{n}{k}^{\!(r)}_{\!\rm C}$  and  the $r$th-order Stirling cycle number $\stirlingcycle{n}{k}^{\!(r)}$ have the following interpretations. For $n \ge 0$ and $k \ge 0$, we have
\begin{align}
\eulersecond{n}{k}^{\!(r)}_{\!\rm C}=|\left\{\sigma\in\scrs_n^{(r)} \mid  \casc(\sigma)=k\right\}|,
\label{eq:intro-QC}
\end{align}
and
\begin{align}
\stirlingcycle{n}{k}^{\!(r)}
=
|\left\{
(\sigma,D) \mid
\sigma\in\scrs_n^{(r)},
D\subseteq\CAsc(\sigma),
|D|=n-k\right\}|.
\label{eq:intro-C-marked}
\end{align}
\end{theorem}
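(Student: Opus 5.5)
Note first that the two identities are equivalent. By the defining relation \eqref{eq:quasi-Eulerian.cycle}, the numbers $\eulersecond{n}{i}^{\!(r)}_{\!\rm C}$ are uniquely determined by the binomial transform
\[
\stirlingcycle{n}{n-j}^{(r)}=\sum_{i\ge j}\eulersecond{n}{i}^{\!(r)}_{\!\rm C}\binom{i}{j},
\]
since the transform matrix is unitriangular. On the other hand, counting pairs $(\sigma,D)$ with $|D|=j$ according to $\casc(\sigma)=i$ gives
\[
|\scrv_{n,\,j}^{(r)}|=\sum_i |\scrs_{n,\,i}^{(r)}|\binom{i}{j}.
\]
Hence \eqref{eq:intro-QC} holds if and only if \eqref{eq:intro-C-marked} holds. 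It therefore suffices to prove \eqref{eq:intro-C-marked}, namely $|\scrv_{n,\,n-k}^{(r)}|=\stirlingcycle{n}{k}^{(r)}$. I would do this by showing that $V(n,k):=|\scrv_{n,\,n-k}^{(r)}|$ satisfies the recurrence \eqref{rec.cycle} with the same initial conditions.

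For the recurrence, I would build $\sigma\in\scrs_n^{(r)}$ from $\tau\in\scrs_{n-1}^{(r)}$ by inserting the new top block of letters $(r-1)(n-1)+1,\ldots,(r-1)n-1$ together with the doubled letter $(r-1)n$. Because these are the largest letters, the Stirling condition forces the two copies of $(r-1)n$ to be adjacent. There are two cases.

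\textbf{Case 1: $n-1\in D$.} The top letters $(r-1)(n-1)+1,\ldots,(r-1)n$ appear as a consecutive increasing segment preceded by a smaller letter, and the second copy of $(r-1)n$ sits immediately after this run (the only forced position for the pair to stay adjacent). Delete the whole segment together with the second copy. This should yield an element of $\scrv_{n-1,\,(n-1)-(k-1)}^{(r)}$, which corresponds to the term $\stirlingcycle{n-1}{k-1}$.

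\textbf{Case 2: $n-1\notin D$.} The marked set $D$ lives on indices at most $n-2$, and $|D|=n-k$ equals $(n-1)-(k-1)$.

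Before matching these cases to the two terms of \eqref{rec.cycle}, I need to recompute which case produces which coefficient. The coefficients are $(r-1)!\binom{N-1}{r-1}$, the falling factorial $(N-1)^{\underline{r-1}}$, and $N-1$, where $N=n+(r-1)k$.

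A cleaner alternative is to work with the column-lifted objects directly. Permutations of $[N]$ with $k$ cycles, each of length at least $r$, satisfy exactly \eqref{rec.cycle}: remove the largest element $N$. If it lies in a cycle of length exactly $r$, removing the whole cycle leaves $(N-1)^{\underline{r-1}}$ choices for the other members. Otherwise, deleting $N$ from its cycle leaves $N-1$ choices for its predecessor. I would then construct a bijection, or a matching recurrence, on pairs $(\sigma,D)$ that imitates this. The factor $N-1$ should count insertion positions: a Stirling permutation of length $L$ has $L+1$ gaps, and the length here is $(r-1)(n-1)+(n-1)$, suitably corrected by the unmarked structure. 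The factor $(N-1)^{\underline{r-1}}$ should count the ways of creating a new marked consecutive ascent, including placing its $r-1$ elements relative to the structure.

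The main obstacle is the bookkeeping. The insertion counts must depend on $k$, so the number of available slots in the non-marking case must be shown to be $n+(r-1)k-1$, and the marking case must produce $(r-1)!\binom{n+(r-1)k-1}{r-1}$. Since $D$ consists of $n-k$ marked ascents, I would first try to contract each marked consecutive ascent segment into a single letter. This produces a shorter Stirling-type word whose length depends on $k$ and on which gaps are admissible, and I would then count gaps in that word.

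If this direct approach fails, I would follow Callan's $r=2$ argument cited above and generalize it. That means encoding the pair $(\sigma,D)$ as a cycle structure on $[n+(r-1)k]$: each unmarked index contributes $r-1$ extra labels, and a Gessel–Stanley style tree or cycle decomposition yields permutations of $[n+(r-1)k]$ with $k$ cycles, each of size at least $r$.
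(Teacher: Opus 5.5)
Your reduction of \eqref{eq:intro-QC} to \eqref{eq:intro-C-marked} through the unitriangular binomial transform is correct. So is the overall plan of proving \eqref{eq:intro-C-marked} by deleting the letters $(r-1)(n-1)+1,\dots,(r-1)n,(r-1)n$ and matching \eqref{rec.cycle}. This is the Deb--Sokal argument, which the paper quotes rather than reproves; it is also the template for the paper's proof of Theorem~\ref{thm:subset-interpretation}. There is a genuine gap, however: the proposal stops exactly where the proof lies, and the case matching you do write down is wrong. The number of marks is $n-k$, so marking the new index $n-1$ does not increase $k$. In Case~1 you have $|D\setminus\{n-1\}|=n-k-1=(n-1)-k$. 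Deletion therefore lands in $\scrv^{(r)}_{n-1,\,(n-1)-k}$, which is the term $\stirlingcycle{n-1}{k}^{(r)}$ with the \emph{linear} coefficient $n+(r-1)k-1$; it is Case~2 that gives $\stirlingcycle{n-1}{k-1}^{(r)}$. Accordingly, your later guess is the reverse of the truth: $(N-1)^{\underline{r-1}}$ does not count the creation of a new marked consecutive ascent, and $N-1$ does not count the non-marking insertions. A count built on that assignment cannot close.

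What is missing is the following count. Let $N=n+(r-1)k$, and let $\tau\in\scrs^{(r)}_{n-1}$, which has length $r(n-1)$ and hence $r(n-1)+1$ gaps.
\begin{itemize}
\item A consecutive ascent of index $i\le n-2$ contains no letter exceeding $(r-1)(n-1)$. Hence deletion keeps every mark of $D'=D\setminus\{n-1\}$ intact, and insertion preserves them exactly when nothing is put into one of their $r-1$ internal gaps.
\item Unlike consecutive descents in the subset case, consecutive ascents may share an endpoint: for $r=3$, the word $134566422$ contains both $134$ and $456$. So contracting each marked segment to a single letter is not literally well defined. What is true is that the internal-gap sets are pairwise disjoint, so exactly $(r-1)|D'|$ gaps are forbidden.
\item If $n-1\notin D$, then $|D'|=n-k$. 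The new letters form $r-1$ free units: the $r-2$ singletons, and the pair $(r-1)n\,(r-1)n$, which is adjacent by the Stirling condition. They go in any order into $g=r(n-1)+1-(r-1)(n-k)=N-r+1$ allowed gaps, giving $g(g+1)\cdots(g+r-2)=(r-1)!\binom{N-1}{r-1}$ ways.
\item If $n-1\in D$, the run $(r-1)(n-1)+1,\dots,(r-1)n$ must be preceded by an old letter and followed at once by the second copy of $(r-1)n$. So the $r$ new letters form one rigid block, placed in an allowed gap other than the initial one. This gives $r(n-1)-(r-1)(n-k-1)=N-1$ choices.
\end{itemize}
Together with $V(0,k)=\delta_{k,0}$ and the check that deletion and insertion are mutually inverse, this yields \eqref{rec.cycle}. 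The fallback routes, via cycle decompositions of $[N]$ or Callan's $r=2$ bijection, are unnecessary and, as sketched, are not proofs.
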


Notice that for $r=2$, $\scrs_{n}^{(r)}$ is the set of Stirling permutations of $\{1,1,2,2,\ldots,n,n\}$, 
and the number of consecutive ascents coincides with the number of standard ascents in Stirling permutations. 
Hence, Theorem~\ref{thm.combi.interpret.rth} implies \eqref{cycle.r2}, namely,
\begin{align*}
    \eulersecond{n}{k}^{\!(2)}_{\!\rm C} = \eulersecond{n}{k},
\end{align*}
where $\eulersecond{n}{k}$ is the second-order Eulerian number.

\subsection{The subset case}
\label{sec.interpretation.subset}

As to combinatorial interpretations of
the $r$th-order Stirling subset numbers and the
$r$th-order quasi-Eulerian subset numbers for $r \ge 2$, we will restrict our attention to a class of Stirling permutations of $M_n^{(r)}$,
called Stirling subset permutations, which are defined as follows.

First, write
\begin{align} 
M_n^{(r)} = M_{n,\,1}^{(r)} \cup   M_{n,\,2}^{(r)} \cup \cdots  \cup
M_{n,\,n}^{(r)}, 
\end{align}
where for each $1 \le i \le n$,
\begin{align}
M_{n,\,i}^{(r)} = \{(r-1)(i-1)+1, (r-1)(i-1)+2, \ldots, (r-1)i, (r-1)i\}.
\end{align}

A Stirling permutation $\sigma$ of $M_n^{(r)}$ is said to be
a Stirling subset permutation of $M_n^{(r)}$ if for each $i$,
the elements in $M_{n,\,i}^{(r)}$ appear in non-increasing order
(from left to right) in $\sigma$. For example, 
$244321$ is a Stirling subset permutation on $M_2^{(3)}$, where $M_{2,\,1}^{(3)}=\{1,2,2\}$ and $M_{2,\,2}^{(3)}=\{3,4,4\}$.

Let $\scrb_{n}^{(r)}$ denote the set of  
 Stirling subset permutations of $M_n^{(r)}$. Note that  $\scrb_{n}^{(2)} = \scrs_{n}^{(2)}$ and $\scrb_{n}^{(r)} \subset \scrs_{n}^{(r)}$ for $r \ge 3$ and $n \ge 1$.
By analogy with the definition of consecutive ascents described in Section~\ref{sec.interpretation.cycle}, we define the consecutive descents of $\sigma\in \scrb_n^{(r)}$.

\begin{definition}
Given $\sigma\in \scrb_n^{(r)}$, a segment $\sigma_{j} \cdots \sigma_{j+r-2} \sigma_{j+r-1}$ is called a consecutive descent if the 
following two conditions hold. 
\begin{enumerate}
    \item 
The segment $\sigma_{j} \cdots \sigma_{j+r-2}$ is the decreasing
sequence
of consecutive integers from $ (r-1)i$ to $(r-1)i-(r-2)$ for some $2 \leq i \le n$ and $\sigma_{j+r-2}>\sigma_{j+r-1}$.

\item 
Assume that
$\sigma_{j+r-1}$  belongs to a block $M^{(r)}_{n,\,k}$.
Then the rest of the elements in
$M^{(r)}_{n,\,k}$ appear after $\sigma_{j+r-1}$ in  $\sigma$. 
\end{enumerate}
\end{definition}  

Notice that in the above
definition, by the defining property of Stirling permutations, the first element $\sigma_j$ of a consecutive descent $\sigma_j\cdots\sigma_{j+r-1}$ is necessarily its second occurrence in $\sigma$. Moreover, since the elements of each block $M_{n,\,i}^{(r)}$ 
appear in non-increasing order in $\sigma$, $\sigma_{j+r-1}$ must be the maximum of its underlying block, i.e.,  the
element $(r-1)k$ of $M_{n,\,k}^{(r)}$.

As in the case of consecutive ascents, for a consecutive descent $\sigma_{j} \cdots \sigma_{j+r-2} \sigma_{j+r-1}$ with $\sigma_j=(r-1)i$,  we use $i$ to represent this consecutive descent.
Clearly, the integer $i$  is uniquely determined and different consecutive descents must have distinct indices. Thus the indices of the consecutive descents form a subset of  $\{2,3,\ldots,n\}$. 

Write $\CDes(\sigma)$ for the set of indices of consecutive descents
of $\sigma$ and $\cdes(\sigma)$ for its cardinality.
Define $\scrb_{n,\,k}^{(r)}$ to be the set of
Stirling subset permutations of $M_n^{(r)}$ with $k$ consecutive descents:
\begin{align}
\scrb_{n,\,k}^{(r)}
=
\{\sigma \in \scrb_{n}^{(r)} \mid \cdes(\sigma) = k\}.
\end{align}
We adopt the convention that $\scrb_0^{(r)}$ consists of the empty word with 
no consecutive descents.
For example, when $r=3$ and $n=2$, we have $M_2^{(3)}=\{1,2,2\}\cup\{3,4,4\}$, 
\begin{align*}
\scrb_{2,\,0}^{(3)}
=
\{&
221443,\ 224413,\ 224431,\ 244213,\ 244231,\\
&244321,\ 442213,\ 442231,\ 442321
\},
\end{align*}
and $\scrb_{2,\,1}^{(3)}
=\{443221\}$.

Let $\scre_{n,\,k}^{(r)}$ denote the set of pairs $(\sigma,D)$ with $\sigma\in\scrb_n^{(r)}$, $D\subseteq\CDes(\sigma)$ and $|D|=k$:
\begin{align} \label{pairs}
\scre_{n,\,k}^{(r)}=\left\{
(\sigma,D) \mid
\sigma\in\scrb_n^{(r)},
D\subseteq\CDes(\sigma),
|D|=k\right\}.
\end{align} 
Denote $e_{n,\,k}^{(r)} = | \scre_{n,\,k}^{(r)} |$
and $b_{n,\,k}^{(r)} = | \scrb_{n,\,k}^{(r)} |$. By definition,
\begin{align}
e_{n,\,k}^{(r)}
=
\sum_{i=k}^{n} b_{n,\,i}^{(r)} \binom{i}{k}.
\label{eq.enkr.eulersecond}
\end{align}

The above description of $\scre_{n,\,k}^{(r)}$ can be 
rephrased in terms of enriched Stirling subset permutations on $M_n^{(r)}$, which
we call consecutive-descent-marked Stirling subset permutations,
in alignment with the term consecutive-ascent-marked Stirling
permutations introduced by Deb and Sokal~\cite{Deb-Sokal}. 
Specifically, for each consecutive descent, we insert a down
arrow immediately before its last entry if its index belongs
to $D$, and a bar otherwise. In this notation, 
$\scre_{n,\,k}^{(r)}$ can be viewed as the set of consecutive-descent-marked Stirling subset permutations on $M_n^{(r)}$ with $k$ down arrows. For example, an element $(887664543221,\{4\}) \in \scre_{4,\,1}^{(3)}$ can be represented as follows:
\begin{align}
8\; 8\;7\downarrow6\;6\;4\;5\;4\;3\mid2\;2\;1. 
\end{align}

We are now ready to present the combinatorial interpretations of 
the $r$th-order Stirling subset numbers and 
the $r$th-order quasi-Eulerian subset numbers. 

\begin{theorem}
\label{thm:subset-interpretation}
Let $r\ge2$.  For $n\ge0$ and $k \ge 0
$, we have
\begin{align}
\eulersecond{n}{k}^{\!(r)}_{\!\rm S}=|\left\{\sigma\in\scrb_n^{(r)} \mid  \cdes(\sigma)=k\right\}|,
\label{eq:intro-QS}
\end{align}
and
\begin{align}
\stirlingsubset{n}{k}^{\!(r)}
=
|\left\{
(\sigma,D) \mid
\sigma\in\scrb_n^{(r)},
D\subseteq\CDes(\sigma),
|D|=n-k\right\}|.
\label{eq:intro-S-marked}
\end{align}
\end{theorem}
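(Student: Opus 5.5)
Note first that, by \eqref{eq.enkr.eulersecond} and the defining relation \eqref{eq:quasi-Eulerian.subset}, the two identities in Theorem~\ref{thm:subset-interpretation} are equivalent. The binomial transformation $a_i\mapsto\sum_{i\ge j}a_i\binom{i}{j}$ is invertible. So if we prove \eqref{eq:intro-S-marked}, i.e.\ $e_{n,\,n-k}^{(r)}=\stirlingsubset{n}{k}^{(r)}$ for all $k$, then comparing
\[
\stirlingsubset{n}{n-j}^{(r)}=\sum_{i\ge j}\eulersecond{n}{i}^{\!(r)}_{\!\rm S}\binom{i}{j}
\qquad\text{and}\qquad
e_{n,\,j}^{(r)}=\sum_{i\ge j} b_{n,\,i}^{(r)}\binom{i}{j}
\]
gives $\eulersecond{n}{i}^{\!(r)}_{\!\rm S}=b_{n,\,i}^{(r)}$, which is \eqref{eq:intro-QS}. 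The plan is therefore to prove \eqref{eq:intro-S-marked} by showing that $E(n,k):=e_{n,\,n-k}^{(r)}$ satisfies the recurrence \eqref{rec.subset} with the same initial conditions. The case $n=0$ is immediate from the conventions.

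For the recurrence, I would remove the block $M_{n,\,n}^{(r)}$, i.e.\ the largest values $(r-1)(n-1)+1,\ldots,(r-1)n,(r-1)n$, from a marked permutation $(\sigma,D)\in\scre_{n,\,n-k}^{(r)}$. Since these are the largest values, the Stirling condition forces the two copies of $m=(r-1)n$ to be adjacent, and the remaining elements $m-1,\ldots,m-r+2$ to appear in decreasing order after them. Deleting them yields $\sigma'\in\scrb_{n-1}^{(r)}$. I would then split by whether $n\in\CDes(\sigma)$, that is, whether $mm(m-1)\cdots(m-r+2)$ is contiguous and immediately followed by the maximum of a block whose other elements all come later.

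\emph{Case A: $n\in D$ (down arrow at index $n$).} The block sits as a contiguous run $mm(m-1)\cdots(m-r+2)$ glued before the first occurrence of the maximum of some block $M_{n,\,j}^{(r)}$ that is still ``entirely ahead''. After removal, this run together with its successor should play the role of a marked descent in $\sigma'$. I expect to match this case with the term $k\,E(n-1,k)$: there are $n-1$ blocks, of which $n-k$ carry down arrows in the relevant count, so there are $k$ unmarked candidates. Making this precise is where I expect to have to be careful.

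\emph{Case B: $n\notin D$.} I would count the ways to insert the block into a fixed $\sigma'$ of length $N=(r-1)(n-1)+(n-1)$. The run $mm$ followed by the $r-2$ decreasing singletons amounts to choosing positions in the sequence of gaps, giving a count of the form $\binom{N'+r-1}{r-1}$ with $N'$ adjusted to match $\binom{n+(r-1)k-1}{r-1}$. The block contributes one new part, so $k-1\to k$. The insertion must also neither create nor destroy consecutive descents at other indices, apart from the bookkeeping with the bar at index $n$.

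\textbf{Main obstacle.} This local bookkeeping is where I expect the difficulty. Insertions can break an existing consecutive descent of $\sigma'$ (by splitting a segment, or by placing a smaller block element after its head), and they can create new ones. The recurrence coefficients $k$ and $\binom{n+(r-1)k-1}{r-1}$ depend on $k$ rather than on $n$ alone, which signals that the marked descents must be treated as ``merged'' units: a down arrow should mean that two blocks are fused into one part. So I would first try the alternative model in which each $(\sigma,D)$ is cut at the down arrows into $k$ superblocks. I would then verify the recurrence in that model, first checking it against the listed example $r=3$, $n=2$: the nine plus one permutations give $E(2,2)=10=\stirlingsubset{6}{2}_3$ and $E(2,1)=1$.
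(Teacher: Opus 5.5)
Your reduction of \eqref{eq:intro-QS} to \eqref{eq:intro-S-marked} through the invertibility of the binomial transform is fine. Your skeleton is exactly the paper's: delete $M_{n,\,n}^{(r)}$, split on whether $n\in D$, match $n\in D$ with $k\,E(n-1,k)$, and match $n\notin D$ with $\binom{n+(r-1)k-1}{r-1}E(n-1,k-1)$. But the proposal stops where the proof actually begins. Neither count is carried out, and the constraint you impose in Case B is the wrong one.

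You require that the insertion ``neither create nor destroy consecutive descents at other indices''. If this were enforced, the number of admissible insertions would depend on $\cdes(\sigma')$ rather than on $|D'|$, and no recurrence would follow. The point of working with marked pairs is that only the descents indexed by $D'=D\setminus\{n\}$ must survive. Unmarked descents may be destroyed by the insertion, and the deletion may create new ones, which simply receive bars; the correspondence is still a bijection.

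The two facts you are missing are these.
\begin{enumerate}
\item Every inserted letter exceeds all letters of $\sigma'$, so the value conditions and condition~2 of an existing consecutive descent are unaffected. A descent is therefore destroyed only if a letter lands in one of its $r-1$ internal gaps. This also disposes of your worry about ``placing a smaller block element after its head''.
\item Distinct consecutive descents are disjoint. The last letter of one is the first occurrence of a block maximum, whereas the first letter of another is a second occurrence.
\end{enumerate}
Contracting the $n-k$ marked descents to single letters leaves a word of length $r(n-1)-(r-1)(n-k)=n+(r-1)k-r$. Shuffling in the $r-1$ units $mm,\,m-1,\ldots,m-r+2$ then gives exactly $\binom{n+(r-1)k-1}{r-1}$ insertions.

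In Case A, the run $mm(m-1)\cdots(m-r+2)$ must sit immediately before the first occurrence of one of the $n-1$ old block maxima. Each marked descent of $\sigma'$ excludes exactly one of these positions, namely the gap before its last letter. No other internal gap of a marked descent precedes a block maximum, since $\sigma_{j+1},\ldots,\sigma_{j+r-2}$ are not multiples of $r-1$. Your count (``$n-1$ blocks, of which $n-k$ carry down arrows'') would give $k-1$ choices. The relevant number is $|D'|=n-k-1$, which gives $(n-1)-(n-k-1)=k$.

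The remark that after removal the run ``together with its successor should play the role of a marked descent in $\sigma'$'' does not make sense, since the run is gone from $\sigma'$. The ``superblock'' model you propose as a fallback is unnecessary, and as sketched it does not supply these counts.
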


\proof
To prove the theorem, we first show that for $n \ge 1$ and $0\le k\le n$, the numbers $e_{n,\,k}^{(r)}$ satisfy the recurrence relation
\begin{align}
e_{n,\,k}^{(r)} 
	=
	\binom{r(n-1) \,-\, (r-1)(k-1)}{r-1}\, e_{n-1,\,k}^{(r)}
	+
	(n-k)\, e_{n-1,\,k-1}^{(r)} 
     \label{eq.rec.enkr}
\end{align}
with initial conditions $e_{0,\,k}^{(r)}=\delta_{k,\,0}$,
where $e_{n,\,k}^{(r)}=0$ for $k<0$ or $k>n$. After replacing $k$ by $n-k$, recurrence~\eqref{eq.rec.enkr} becomes precisely the recurrence for
$\stirlingsubset{n}{k}^{\!(r)}$, with the same initial conditions. Hence~\eqref{eq:intro-S-marked} follows. Combining~\eqref{eq:intro-S-marked} with the defining relation~\eqref{eq:quasi-Eulerian.subset} and~\eqref{eq.enkr.eulersecond} yields~\eqref{eq:intro-QS}.

Now we proceed to prove~\eqref{eq.rec.enkr}. Let $(\sigma,D) \in \scre_{n,\,k}^{(r)}$ be a
consecutive-descent-marked Stirling subset permutation.  Set $u_i = (r-1)n-i+2$ for  $2\leq i \leq r$
and $u_{1} = (r-1)n$.

Let $\sigma'$ be the permutation obtained from $\sigma$ by deleting all 
the elements of $M_{n,\,n}^{(r)}$, and put $D'=D\setminus\{n\}$.
Notice that every consecutive descent of $\sigma$ whose index $i$ belongs to $D'$ remains a consecutive descent of $\sigma'$ with the same index $i$.
Thus $(\sigma',D')$ belongs to $\scre_{n-1,\,k}^{(r)}$ if $n\notin D$, and to $\scre_{n-1,\,k-1}^{(r)}$ if $n\in D$. Note that if the deletion operation
creates additional consecutive descents, we just represent 
these newly formed consecutive descents by bars so that the consecutive descents marked by down arrows remain unchanged.  For example, for $r=3,n=4$, after deleting elements $8,8,7$ in $665\downarrow 443882217$, we obtain $665\downarrow 443 \mid 221$.

Conversely, given a consecutive-descent-marked Stirling subset permutation $(\sigma',D')$, we produce an element $(\sigma,D) \in \scre_{n,\,k}^{(r)}$ by inserting the $r$ elements 
$u_1, u_2,\ldots, u_{r}$. It is clear
the elements $u_1$ and $u_2$ must be adjacent in view of
the property of Stirling permutations.  Now we encounter two cases 
depending on whether $n$ is in $D$.

\noindent
Case 1: $n \notin D$. The underlying permutation $\sigma$ of $(\sigma,D) \in \scre_{n,\,k}^{(r)}$ can be viewed as a shuffle of the sequence $(u_1,\cdots,u_r)$ and the underlying permutation $\sigma'$ of $(\sigma',D') \in \scre_{n-1,\,k}^{(r)}$, where $D=D'$. Since $u_1$ and $u_2$ 
need to be adjacent, we may treat $u_1u_2$ as a single element, thereby regarding the sequence $(u_1,\cdots,u_r)$ as a sequence of $r-1$ elements. 

Before counting the admissible insertions, we show that distinct
consecutive descents of each Stirling subset permutation are disjoint. Suppose that $\sigma_j\cdots\sigma_{j+r-2}\sigma_{j+r-1}$ and $\sigma_t \cdots \sigma_{t+r-2}\sigma_{t+r-1}$ are two different consecutive descents of $\sigma$. Without loss of generality, we may assume that $j<t$. 

We first notice that the elements $\sigma_{j+1}, \ldots, \sigma_{j+r-2}$, if any, all appear in
  a block $M_{n,\,i}^{(r)}$ and the elements $\sigma_{t+1}, \ldots, \sigma_{t+r-2}$, if any, all appear in
  another block $M_{n,\,i'}^{(r)}$ with $i' \neq i$. 
Moreover, we have  $j+r-1\not= t$, that is, the last element of the consecutive descent
$\sigma_j \cdots \sigma_{j+r-2}\sigma_{j+r-1}$ cannot be the first element of another consecutive descent.
 Since the first element $\sigma_t$ is the maximum element 
 of its corresponding block $M_{n,\,k}^{(r)}$ followed by a smaller element, it makes its second appearance in $\sigma$ by the Stirling property.  The last element $\sigma_{j+r-1}$ is also the maximum of its corresponding block $M_{n,\,k'}^{(r)}$, where $k'$ and
 $k$ are not necessarily distinct. But $\sigma_{j+r-1}$ is the first occurrence of its value in $\sigma$. So we conclude that the two
 consecutive descents do not overlap in $\sigma$. 

For example, when $r=3$ and $n=4$, take $(\sigma',D')=(665443221,\{2,3\})$. The following representation shows that the two consecutive descents corresponding to $i=3$ and $i=2$ are disjoint segments of $\sigma'$, although they contain the common value $4$:
\begin{align}
\label{ex.forbidden.insertion}
6\;\,\underbrace{6\cdot 5\cdot 4}_{i=3}\;\,\underbrace{4\cdot3\cdot2}_{i=2}\;\,2\;\,1.
\end{align}

In order to maintain the consecutive descents marked by down arrows after the insertion, the new element cannot be inserted into the positions between any two adjacent elements $\sigma_s$ and $\sigma_{s+1}$ in a consecutive descent marked by a down arrow. Therefore for such a consecutive descent, there are $r-1$ positions at which the insertion is not allowed. For example, the dots in~\eqref{ex.forbidden.insertion} represent the forbidden insertion positions in $\sigma'$. Note that we adopt the dot notation used by Deb and Sokal~\cite{Deb-Sokal} to represent consecutive-ascent-marked Stirling permutations.

For $(\sigma',D')$, since the consecutive descents are disjoint, contracting each consecutive descent marked by down arrows to a single element yields a sequence of length $\ell=r(n-1)-(r-1)k$. We then see that the total number of feasible insertions equals $ \binom{\ell+r-1}{r-1}$, which is the number of shuffles of two disjoint permutations of lengths $\ell$ and $r-1$, respectively.

\noindent
Case 2: $n \in D$. Given an element $(\sigma',D') \in\scre_{n-1,\,k-1}^{(r)}$, we insert the
largest block into $\sigma'$ and set $D=D'\cup\{n\}$. Since $u_1u_2$ must be adjacent,
the condition $n\in D$ forces all the $r$ inserted elements to
form a consecutive sequence $u_1u_2 \cdots u_r$. Moreover, if
$n$ is an index of a consecutive descent, then this consecutive
sequence must be inserted immediately before the first occurrence of the maximum element of an existing block $M_{n,\,i}^{(r)}$, where $1\le i\le n-1$. 
Each of the $k-1$ consecutive descents corresponding to the indices in $D'$ excludes exactly one position for the insertion: the position immediately before the final element of each consecutive descent. These excluded positions are distinct. Hence there are $(n-1)-(k-1)=n-k$ choices.

It can be seen that the deletion and the insertion procedures are
inverses of each other. This yields a bijection, and so we 
have completed the proof of the recurrence~\eqref{eq.rec.enkr}.

We now turn to the proof of the interpretation of the $r$th-order
Stirling subset numbers.
Set
\begin{align*}
f_{n,\,k}^{(r)}=e_{n,\,n-k}^{(r)}.
\end{align*}
Replacing $k$ by $n-k$ in \eqref{eq.rec.enkr} gives
\begin{align*}
f_{n,\,k}^{(r)}
=
\binom{n+(r-1)k-1}{r-1}f_{n-1,\,k-1}^{(r)}+k f_{n-1,\,k}^{(r)}.
\end{align*}
This is precisely the recurrence \eqref{rec.subset}
for $\stirlingsubset{n}{k}^{\!(r)}$.
Taking the initial values into consideration, we are led to  
\begin{align*}
\stirlingsubset{n}{k}^{\!(r)}
=
e_{n,\,n-k}^{(r)}.
\end{align*}
This proves~\eqref{eq:intro-S-marked}.

It follows from~\eqref{eq:intro-S-marked} that
\begin{align}
\sum_{k=0}^n
\stirlingsubset{n}{k}^{\!(r)}x^{n-k}
&=
\sum_{\sigma\in\scrb_n^{(r)}}
\ \sum_{D\subseteq\CDes(\sigma)}x^{|D|}
\notag\\[6pt]
&=
\sum_{\sigma\in\scrb_n^{(r)}}
(1+x)^{|\CDes(\sigma)|}.
\label{eq:subset-marking-transform}
\end{align}
On the other hand, multiplying 
the defining relation~\eqref{eq:quasi-Eulerian.subset}
by $x^j$ and summing over $0\le j\le n$, we get
\begin{align*}
\sum_{k=0}^n
\stirlingsubset{n}{k}^{\!(r)}x^{n-k}
&=
\sum_{k=0}^n
\eulersecond{n}{k}^{\!(r)}_{\!\rm S}
\sum_{i=0}^k\binom{k}{i}x^i\\[6pt]
&=
\sum_{k=0}^n
\eulersecond{n}{k}^{\!(r)}_{\!\rm S}(1+x)^k.
\end{align*}
Combining this identity with
\eqref{eq:subset-marking-transform} and replacing
$x$ by $x-1$, we obtain
\begin{align*}
\sum_{k=0}^n
\eulersecond{n}{k}^{\!(r)}_{\!\rm S}x^k
=
\sum_{\sigma\in\scrb_n^{(r)}}
x^{|\CDes(\sigma)|}.
\end{align*}
Equating coefficients of $x^k$ proves~\eqref{eq:intro-QS}.
\qed

We end this section with some special cases 
of the $r$th-order Stirling subset numbers. 
Notice that the Stirling subset permutations in $\scrb_n^{(r)}$ are
counted by the diagonal entries of $S^{(r)}$, i.e.,
\begin{align*}
|\scrb_{n}^{(r)}|=\stirlingsubset{n}{n}^{(r)}  = \frac{(rn)!}{(r!)^n n!}.
\end{align*}

For $r=2$,  the underlying multiset becomes
$[n]_2$, and $\scrb_n^{(2)}$ is precisely the set of ordinary Stirling permutations of $[n]_2$.
In this case, a  consecutive descent reduces to a $1$-descent  in the terminology
of Janson, Kuba and Panholzer~\cite{Janson-etal}. For $\sigma=\sigma_1\sigma_2\cdots\sigma_{2n}\in\scrb_n^{(2)}$,
let $\Des_1(\sigma)$ denote the set of positions $1\le i\le2n-1$
such that $\sigma_i>\sigma_{i+1}$ and $\sigma_{i+1}$ makes its
first appearance in $\sigma$. Write
$\des_1(\sigma)=|\Des_1(\sigma)|$.
The map $i\to\sigma_i$ is a bijection from
$\Des_1(\sigma)$ to $\CDes(\sigma)$. 

Since $\stirlingsubset{n}{k}^{\!(2)}$ equals the Ward number $W(n,k)$,
specializing~\eqref{eq:intro-S-marked} to $r=2$ and applying
the bijection from $\Des_1(\sigma)$ to $\CDes(\sigma)$ shows that $W(n,k)$ equals the number of ordinary Stirling
permutations of $[n]_2$ with $n-k$ $1$-descents marked by down arrows (in the sense of marked consecutive descents, as aforementioned); see also Barbero, Salas and Villase\~nor~\cite{Barbero-etal}.

With the interpretation as given in~\eqref{eq:intro-QS}, we can thus obtain a combinatorial proof for the identity 
\begin{align*}
\eulersecond{n}{k}^{\!(2)}_{\!\rm S}
=
\eulersecond{n}{n-k-1},
\end{align*}
as given in~\eqref{subset.r2}.

\section{The row log-concavity conjecture}
\label{sec:row-log-concavity}

In this section, we focus on the row log-concavity
of the $r$th-order Stirling triangles. For $r=1,2$, the log-concavity is known;
see~\cite{Deb-Sokal}.
For $3\leq r\le 5$, 
Deb and Sokal~\cite[Conjecture~1.3(c) and Conjecture~1.4(c)]{Deb-Sokal} conjectured that the row log-concavity holds 
for both the $r$th-order Stirling cycle triangle and the $r$th-order Stirling subset triangle.
The main objective of this section is to prove this conjecture. 
We also show that row log-concavity still holds for the sixth-order Stirling subset triangle for $n\geq 23$. 

Recall that for a nonnegative sequence $a=(a_0,a_1,\ldots)$, we say $a$ is log-concave if $a_k^2- a_{k-1}a_{k+1} \ge 0$ for all $k \ge 1$.  

\begin{theorem}
\label{thm:main-row-LC}
The following statements hold.
\begin{enumerate}
\item
For $3\le r\le5$, every row of both the $r$th-order Stirling cycle
triangle $C^{(r)}$ and the $r$th-order Stirling subset triangle $S^{(r)}$
is log-concave.

\item
For the sixth-order Stirling subset triangle $S^{(6)}$, every row with
$n\ge 23$ is log-concave.
\end{enumerate}
\end{theorem}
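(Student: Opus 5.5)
We argue by induction on $n$, using the triangular recurrences \eqref{rec.cycle} and \eqref{rec.subset}. Both have the form
\begin{align*}
a_{n,k}=f_n(k)\,a_{n-1,k-1}+g_n(k)\,a_{n-1,k},
\end{align*}
where, in the cycle case, $f_n(k)=(r-1)!\binom{n+(r-1)k-1}{r-1}$ and $g_n(k)=n+(r-1)k-1$. In the subset case, $f_n(k)=\binom{n+(r-1)k-1}{r-1}$ and $g_n(k)=k$. This is exactly the setting of Sagan's criterion for preserving log-concavity along rows of a triangle satisfying a two-term recurrence. Sagan's criterion requires $f_n(k)$ and $g_n(k)$ to be log-concave in $k$. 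It also requires a compatibility inequality of the form
\begin{align*}
f_n(k-1)g_n(k+1)+f_n(k+1)g_n(k-1)\le 2f_n(k)g_n(k).
\end{align*}
The coefficient sequences here are polynomials in $k$, of degree $r-1$ for $f_n$ and degree at most $1$ for $g_n$. Their log-concavity in $k$ is routine: a product of linear factors $n+(r-1)k-j$ with positive values is log-concave.

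The compatibility condition is where the argument can break. For $f_n$ of degree $r-1\ge 2$, the second differences grow, and the plain inequality fails for small $n$ or for boundary values of $k$. We therefore plan to use Dey's extension of Sagan's criterion. Dey's extension allows the induction hypothesis to be strengthened: instead of plain log-concavity $a_{n-1,k}^2\ge a_{n-1,k-1}a_{n-1,k+1}$, we carry a quantitative ratio inequality of the form
\begin{align*}
a_{n,k}^2\ge \Bigl(1+\tfrac{c}{\phi(n,k)}\Bigr)a_{n,k-1}a_{n,k+1},
\end{align*}
for a suitable explicit correction term $\phi(n,k)$. We guess this correction from numerical data, the way the authors obtained their strengthened inequalities. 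A natural candidate mimics the ultra-log-concave-type factor coming from the polynomial weights, for example a factor built from $\binom{n+(r-1)k-1}{r-1}$ evaluated at adjacent $k$.

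The inductive step then reduces to showing that a certain explicit polynomial in $(n,k)$ is nonnegative on the region $1\le k\le n-1$. We will verify this separately for each $r\in\{3,4,5\}$ and each of the two families, by substituting $k=n-j$ or expanding in suitable nonnegative variables so that all coefficients become visibly positive.

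The main obstacle is choosing the strengthened inequality. It must be strong enough to propagate through the recurrence, yet still true at the base rows and at the boundary columns $k=1$ and $k=n-1$, where the entries have closed forms such as $\stirlingsubset{n}{n}^{(r)}=(rn)!/((r!)^n n!)$. For the finitely many small rows where the polynomial inequality fails, we will check log-concavity by direct computation.

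For part~(b), with $r=6$, we run the same scheme. The positivity polynomial is now expected to be nonnegative only once $n$ is large. We will therefore:
\begin{enumerate}
\item start the induction at $n=23$ after verifying the strengthened inequality for row $23$ numerically;
\item show that the polynomial condition holds for all $n\ge 23$ by writing $n=23+m$ with $m\ge0$ and checking coefficient positivity in $m$ and in the column variable.
\end{enumerate}
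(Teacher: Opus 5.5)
Your proposal has a genuine gap, and it lies where the difficulty of the theorem lies.

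First, Dey's extension does not strengthen the induction hypothesis. It keeps plain log-concavity of row $n-1$ and only relaxes $\Gamma_k(c,d)\ge0$ to $\Gamma_k(c,d)^2\le 4\Delta_k(c)\Delta_k(d)$. That is exactly nonnegativity of the quadratic form $\Delta_k(c)a_k^2+\Gamma_k(c,d)a_ka_{k-1}+\Delta_k(d)a_{k-1}^2$, which bounds $b_k^2-b_{k-1}b_{k+1}$ from below. Used as stated:
\begin{itemize}
\item plain Sagan settles $C^{(3)}$, $S^{(3)}$ and $S^{(4)}$;
\item plain Dey settles $C^{(4)}$ and $S^{(5)}$, via explicit identities such as $\Gamma_k(c,d)^2-4\Delta_k(c)\Delta_k(d)=-972(3k+n-4)^2(3k+n-2)(3k+n+2)$ for $C^{(4)}$.
\end{itemize}
A strengthened inequality is needed only for $C^{(5)}$, where even Dey's condition fails at $n=3$, $k=2$, and for $S^{(6)}$. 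For precisely these cases you never say what the strengthening is (``guess from numerical data''), so the hard cases are unproved. The paper's argument rests on specific choices: $\lambda^{\mathrm C}_{n,\,k}$ for $\widetilde C_{n,\,k}=C_{n,\,k}/(n+4k)!$, and $\lambda^{\mathrm S}_{n,\,k}=1+52n(n-2)/(n+5k)^3$ for the normalized $\widetilde S_{n,\,k}$. It also needs the comparison $\lambda\ge\rho$ against the factor that encodes log-concavity in the normalized coordinates.

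Second, the inductive step does not ``reduce to showing that a certain explicit polynomial in $(n,k)$ is nonnegative.'' Pushing a strengthened hypothesis through the recurrence only gives $b_k^2-\lambda_{n,k}b_{k-1}b_{k+1}\ge a_k^2\,Q_{n,k}(u)$. Here $u=a_{k-1}/a_k$ is an unknown ratio from row $n-1$, and $Q_{n,k}$ is quadratic in $u$. You must then either prove $Q_{n,k}(u)>0$ for all $u$, or control $u$. The first option (negative discriminant, positive leading coefficient) is what works for $S^{(6)}$. For $C^{(5)}$ the paper only obtains $Q_{n,k}(u)\ge0$ on $0\le u\le U_{n-1,k}$. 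This requires an independent a priori bound $\widetilde C_{n,\,k-1}/\widetilde C_{n,\,k}\le U_{n,k}$, proved by a merge--split double counting over compositions of $n$ with weights $\prod_j 1/(e_j+4)$. Your scheme has no ingredient playing this role.

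Finally, for part (b), starting the induction at $n=23$ requires one inequality that both propagates and implies log-concavity at every $n\ge 23$. The paper's strengthened inequality is proved for all $n\ge3$, but is only shown to imply log-concavity for $n\ge80$. The rows $23\le n\le79$ are settled by direct computation. Your route therefore needs a strengthening valid all the way down to $n=23$, and you have not exhibited one.
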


To prove the theorem, we need the following log-concavity criteria due to Sagan~\cite{Sagan1988} and
Dey~\cite{Dey}. For a sequence $s=(s_k)_{k\ge0}$, define for $k \ge 1$,
\begin{equation}
 \Delta_k(s)=s_k^2-s_{k-1}s_{k+1}.
\end{equation}
For two sequences $c=(c_k)_{k\ge0}$ and $d=(d_k)_{k\ge0}$, define for $k \ge 1$,
\begin{equation}
 \Gamma_k(c,d)
 =2c_kd_k-c_{k-1}d_{k+1}-d_{k-1}c_{k+1}.
\end{equation}

Sagan \cite{Sagan1988} found the following criterion  for  log-concavity.

\begin{lemma}[Sagan's criterion]
\label{lem:Sagan-log-concavity}
Let $(a_k)_{k\ge0}$ be a nonnegative log-concave sequence with no
internal zeros, with the convention that $a_k = 0$ for $k < 0$, and for $k \ge 0$, define 
\[
 b_k=c_ka_k+d_ka_{k-1},
\]
where $c_k,d_k\ge0$. Suppose that, for every $k \ge  1$,
\[
 \Delta_k(c) \ge 0,\qquad \Delta_k(d) \ge  0,
\]
and
\begin{equation}
 \Gamma_k(c,d) \ge 0.
 \label{eq:Sagan-cross}
\end{equation}
Then $(b_k)_{k\ge0}$ is log-concave.
\end{lemma}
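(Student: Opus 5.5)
The plan is to expand $\Delta_k(b)=b_k^2-b_{k-1}b_{k+1}$ directly and group the terms into three parts. The first part is controlled by $\Delta_k(c)$, the second by $\Delta_k(d)$, and the third, consisting of the cross terms, by $\Gamma_k(c,d)$. Fix $k\ge1$. Using $b_k=c_ka_k+d_ka_{k-1}$ together with the convention $a_j=0$ for $j<0$, we get
\begin{align*}
\Delta_k(b)
&=\bigl(c_k^2a_k^2-c_{k-1}c_{k+1}a_{k-1}a_{k+1}\bigr)
+\bigl(d_k^2a_{k-1}^2-d_{k-1}d_{k+1}a_{k-2}a_{k}\bigr)\\
&\quad+\bigl(2c_kd_ka_ka_{k-1}-c_{k-1}d_{k+1}a_{k-1}a_k-d_{k-1}c_{k+1}a_{k-2}a_{k+1}\bigr).
\end{align*}

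The key auxiliary facts about $(a_k)$ are the two inequalities
\[
a_{k-1}a_{k+1}\le a_k^2 \qquad\text{and}\qquad a_{k-2}a_{k+1}\le a_{k-1}a_k .
\]
The first is the hypothesis that $(a_k)$ is log-concave. For the second, we may assume $a_{k-2}$ and $a_{k+1}$ are both nonzero, since otherwise the inequality is trivial. Because $(a_k)$ has no internal zeros, it follows that $a_{k-2},a_{k-1},a_k,a_{k+1}$ are all positive. Log-concavity then says the ratios $a_j/a_{j-1}$ are nonincreasing on this range. Hence $a_{k+1}/a_k\le a_{k-1}/a_{k-2}$, which is the second inequality. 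This is the only place where the ``no internal zeros'' hypothesis is needed, so it is the step requiring the most care, although it is elementary.

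Since all the $c_j$ and $d_j$ are nonnegative, each subtracted product can only increase when we replace its $a$-factor by a larger one. This gives lower bounds for the three parts:
\begin{itemize}
\item the first part is at least $\Delta_k(c)\,a_k^2$, by the bound $a_{k-1}a_{k+1}\le a_k^2$;
\item the second part is at least $\Delta_k(d)\,a_{k-1}^2$, by applying log-concavity at index $k-1$, namely $a_{k-2}a_k\le a_{k-1}^2$ (this is trivial if $k=1$, since then $a_{k-2}=0$);
\item the third part is at least $\Gamma_k(c,d)\,a_{k-1}a_k$, by using $a_{k-2}a_{k+1}\le a_{k-1}a_k$ in the last term.
\end{itemize}
By the hypotheses, each of these three lower bounds is nonnegative. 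Therefore $\Delta_k(b)\ge0$ for every $k\ge1$, and $(b_k)$ is log-concave.
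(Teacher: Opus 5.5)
Your proof is correct and complete. The three-way grouping of $\Delta_k(b)$, the lower bounds $\Delta_k(c)a_k^2$, $\Delta_k(d)a_{k-1}^2$ and $\Gamma_k(c,d)a_{k-1}a_k$ (including the boundary case $k=1$), and the use of the no-internal-zeros hypothesis to get $a_{k-2}a_{k+1}\le a_{k-1}a_k$ all check out; the paper gives no proof of its own and simply cites Sagan~\cite{Sagan1988}, and your argument is essentially the standard expansion proof of that criterion.
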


Dey~\cite[Theorem~30]{Dey} 
extended Sagan's criterion by allowing the
mixed difference in~\eqref{eq:Sagan-cross} to be negative.  

\begin{lemma}[Dey's extension]
\label{lem:quadratic-log-concavity}
Let $(a_k)_{k\ge0}$ be a nonnegative log-concave sequence with no
internal zeros, with the convention that $a_k = 0$ for $k < 0$, and for $k\ge0$, define
\[
 b_k=c_ka_k+d_ka_{k-1},
\]
where $c_k,d_k\ge0$. Suppose that, for every $k\ge1$,
\[
 \Delta_k(c)\ge0,\qquad \Delta_k(d)\ge0,
\]
and, whenever $\Gamma_k(c,d)<0$,
\begin{equation}
 \Gamma_k(c,d)^2\le4\Delta_k(c)\Delta_k(d).
 \label{eq:quadratic-discriminant}
\end{equation}
Then $(b_k)_{k\ge0}$ is log-concave.
\end{lemma}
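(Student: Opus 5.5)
The plan is a direct expansion of $b_k^2-b_{k-1}b_{k+1}$. I will bound the resulting terms using the log-concavity of $(a_k)$ and reduce to a binary quadratic form in $(a_k,a_{k-1})$. Its coefficients will be exactly $\Delta_k(c)$, $\Gamma_k(c,d)$ and $\Delta_k(d)$. Fix $k\ge1$, with the convention $a_j=0$ for $j<0$.

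First, expand using $b_k=c_ka_k+d_ka_{k-1}$:
\begin{align*}
b_k^2&=c_k^2a_k^2+2c_kd_ka_ka_{k-1}+d_k^2a_{k-1}^2,\\
b_{k-1}b_{k+1}&=c_{k-1}c_{k+1}a_{k-1}a_{k+1}+c_{k-1}d_{k+1}a_{k-1}a_k
+d_{k-1}c_{k+1}a_{k-2}a_{k+1}+d_{k-1}d_{k+1}a_{k-2}a_k .
\end{align*}
Next, I need three inequalities for the $a$'s. Two are immediate from log-concavity: $a_{k-1}a_{k+1}\le a_k^2$ and $a_{k-2}a_k\le a_{k-1}^2$. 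The third is $a_{k-2}a_{k+1}\le a_{k-1}a_k$, and this is where the hypothesis of no internal zeros is used.

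To prove the third inequality, note that on the support of $(a_j)$, which is an interval, the ratios $a_{j+1}/a_j$ are nonincreasing. Hence $a_{k+1}/a_k\le a_{k-1}/a_{k-2}$ whenever all four terms are positive, which gives the claim. The boundary cases are checked separately. If $a_{k-1}=0$ and $a_k>0$, then $k-1$ lies below the support, so $a_{k-2}=0$ and both sides vanish. If $a_k=0$ and $a_{k-1}>0$, then $a_{k+1}=0$. If both $a_{k-1}$ and $a_k$ vanish, then the absence of internal zeros forces $a_{k-2}a_{k+1}=0$.

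Since every coefficient $c_{k\pm1},d_{k\pm1}$ is nonnegative, replacing each product of $a$'s in $b_{k-1}b_{k+1}$ by its upper bound gives
\[
b_k^2-b_{k-1}b_{k+1}\ \ge\ \Delta_k(c)\,a_k^2+\Gamma_k(c,d)\,a_ka_{k-1}+\Delta_k(d)\,a_{k-1}^2 .
\]
It remains to show that this quadratic form is nonnegative at the point $(a_k,a_{k-1})$, whose coordinates are both nonnegative. We have $\Delta_k(c),\Delta_k(d)\ge0$. If $\Gamma_k(c,d)\ge0$, then all three terms are nonnegative, which recovers Lemma~\ref{lem:Sagan-log-concavity}. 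If $\Gamma_k(c,d)<0$, then condition~\eqref{eq:quadratic-discriminant} says the discriminant $\Gamma_k^2-4\Delta_k(c)\Delta_k(d)$ is nonpositive. Together with nonnegative diagonal coefficients, this makes the form positive semidefinite, so it is nonnegative on all of $\mathbb{R}^2$. For instance, when $\Delta_k(c)>0$ one completes the square; when $\Delta_k(c)=0$, the discriminant condition forces $\Gamma_k=0$, contradicting $\Gamma_k<0$. Hence $b_k^2\ge b_{k-1}b_{k+1}$ for every $k\ge1$, so $(b_k)$ is log-concave.

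The only delicate step is the inequality $a_{k-2}a_{k+1}\le a_{k-1}a_k$ together with its boundary cases. This is the one place where the hypothesis of no internal zeros is genuinely needed. Everything else is bookkeeping and the elementary positive-semidefiniteness criterion for $2\times2$ quadratic forms.
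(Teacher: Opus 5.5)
Your proof is correct and complete. The paper gives no proof of this lemma: it is quoted from Dey's Theorem~30, so there is no in-paper argument to compare against.

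What you give is the natural self-contained proof. You expand $b_k^2-b_{k-1}b_{k+1}$ as in Sagan's original argument, bounding the cross terms with $a_{k-1}a_{k+1}\le a_k^2$, $a_{k-2}a_k\le a_{k-1}^2$ and $a_{k-2}a_{k+1}\le a_{k-1}a_k$. The only change from Sagan is the last step. Instead of requiring each term to be nonnegative, you require the binary form $\Delta_k(c)x^2+\Gamma_k(c,d)xy+\Delta_k(d)y^2$ to be nonnegative at $(x,y)=(a_k,a_{k-1})$.

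The combined hypotheses are: $\Delta_k(c),\Delta_k(d)\ge0$, together with either $\Gamma_k(c,d)\ge0$ or $\Gamma_k(c,d)^2\le4\Delta_k(c)\Delta_k(d)$. This is exactly the copositivity condition for this $2\times2$ form. So Dey's criterion is the sharpest statement this particular reduction can yield, and your two-case analysis is precisely the copositivity check.

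There is one harmless omission in your case check of $a_{k-2}a_{k+1}\le a_{k-1}a_k$. You do not list the case $a_{k-1},a_k>0$ with $a_{k-2}a_{k+1}=0$. It is trivial, and it is the case that arises at $k=1$ with $a_0,a_1>0$, since $a_{-1}=0$.
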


Given $r\ge 3$, let $T$ denote the triangle $C^{(r)}$ or $S^{(r)}$. We have $T_{0,\,k}=\delta_{k,\,0}$; for $n\ge1$, $T_{n,\,k} \neq 0$ if and only if $1\le k\le n$. Thus every row has no
internal zeros. For the $r$th-order Stirling triangles of both kinds, the rows $n=0,1,2$ are trivially log-concave, since each contains
at most two positive entries. Moreover, after each row is patched with zeros whenever the index is out of range, the correctness of the log-concavity inequalities at $k=1$ and $k\ge n$ is out of question.

Sagan's criterion~\cite{Sagan1988} applies to the third-order cycle triangle and to the third- and fourth-order
subset triangles, while Dey's extension~\cite{Dey} of Sagan's
criterion yields the fourth-order cycle and fifth-order
subset cases. For the fifth-order cycle triangle and the sixth-order subset triangle, we prove stronger inequalities by induction.

\subsection{The cycle cases for $r=3,4$}
\label{subsec:cycle-r3-r4}

The proof of the log-concavity for the cycle cases $r=3,4$ relies on the recurrence \eqref{rec.cycle} for the higher-order Stirling cycle
numbers, that is, for $n \ge 1$ and $0 \le k \le n$,
\begin{equation}
 \stirlingcycle{n}{k}^{\!(r)}
 =\bigl(n+(r-1)k-1\bigr)\stirlingcycle{n-1}{k}^{(r)}
 +(r-1)!\binom{n+(r-1)k-1}{r-1}
  \stirlingcycle{n-1}{k-1}^{(r)}.
 \label{eq.rec.cyc}
\end{equation}

\noindent 
{\it Proof of Theorem~\ref{thm:main-row-LC} for the cycle case for $r=3,4$.} The log-concavity follows from Lemma~\ref{lem:Sagan-log-concavity} or Lemma~\ref{lem:quadratic-log-concavity}.
For $n\ge3$, set
\[
 a_k=\stirlingcycle{n-1}{k}^{(r)},\qquad
 b_k=\stirlingcycle{n}{k}^{\!(r)}.
\]
Then $b_k=c_ka_k+d_ka_{k-1}$, where for $1\le k\le n$,
\[
 c_k=n+(r-1)k-1,\qquad
 d_k=(r-1)!\binom{n+(r-1)k-1}{r-1}.
\]
We set both coefficients $c_k,d_k$ to zero for $k < 1$ or $k > n$. 

For a given row $n\ge 3$, for $k=1$ and $k=n$, we have
\[
\Delta_k(c)=c_k^2\ge0,\qquad
\Delta_k(d)=d_k^2\ge0,\qquad
\Gamma_k(c,d)=2c_kd_k\ge0,
\]
while all three quantities are equal to zero for $k>n$. Hence the hypotheses of
either criterion in~Lemma~\ref{lem:Sagan-log-concavity} or Lemma~\ref{lem:quadratic-log-concavity} need only be verified for $2\le k\le n-1$.

For $2\le k\le n-1$,
\begin{equation}
 \Delta_k(c)=(r-1)^2>0.
 \label{eq.delta.c.cycle}
\end{equation}
To verify the corresponding inequality for $d$, write
\[
 d_k=\prod_{j=0}^{r-2}d_{k,\,j},\qquad
 d_{k,\,j}=n+(r-1)k-1-j.
\]
For $2\le k\le n-1$ and $0\le j\le r-2$, the three factors
$d_{k-1,\,j},d_{k,\,j},d_{k+1,\,j}$ are positive, and
\[
 d_{k,\,j}^2-d_{k-1,\,j}d_{k+1,\,j}=(r-1)^2.
\]
Thus $d_{k,\,j}^2 \ge d_{k-1,\,j}d_{k+1,\,j}$.
Multiplying these inequalities over $j$ yields that for $2\le k\le n-1$,
\begin{equation}
 \Delta_k(d)\ge0.
 \label{eq.delta.d.cycle}
\end{equation}

Now we complete the proof of the log-concavity for $r=3,4$.
For $r=3$, let $n\ge 3$ and assume that the $(n-1)$st row is log-concave. 
For $2\le k\le n-1$, by a  direct calculation, we see that
\[
 \Gamma_k(c,d)=8(2k+n-2)>0.
\]
Together with~\eqref{eq.delta.c.cycle} and
\eqref{eq.delta.d.cycle}, this confirms the hypotheses of
Lemma~\ref{lem:Sagan-log-concavity}, and hence by induction,
the $n$th row is log-concave. 

For $r=4$, $n\ge3$, and $2\le k\le n-1$, we have
\[
 \Gamma_k(c,d)=-18(9k+3n-14)<0.
\]
Thus Lemma~\ref{lem:Sagan-log-concavity} does not apply to this 
case. However, a direct factorization gives
\begin{equation*}
 \Gamma_k(c,d)^2-4\Delta_k(c)\Delta_k(d)
 =-972(3k+n-4)^2(3k+n-2)(3k+n+2)<0.
\end{equation*}
Together with~\eqref{eq.delta.c.cycle} and
\eqref{eq.delta.d.cycle}, we find that the hypotheses of
Lemma~\ref{lem:quadratic-log-concavity} are satisfied. 
By induction, the $n$th row is
log-concave.
\qed

\subsection{The cycle case for $r=5$}
\label{sub:C_5}

We now consider the cycle case of Theorem~\ref{thm:main-row-LC} for $r=5$.
Notice that
the criterion in
Lemma~\ref{lem:quadratic-log-concavity} does not apply.
For $n=3$ and $k=2$, we get  
\[\Gamma_2(c,d)=-48384<0, \qquad
 \Gamma_2(c,d)^2-4\Delta_2(c)\Delta_2(d)=1268822016>0.\]
Instead, we seek a stronger log-concavity inequality that can be proved by induction. This strategy has
been used in various contexts; see Chen--Wang--Xia~\cite{Chen-Wang-Xia-2011}, Kauers--Paule~\cite{Kauers-Paule-2007}, McNamara--Sagan~\cite{McNamara-Sagan-2010} and Sibuya~\cite{Sibuya-1988}. It turns out that in our case, ChatGPT 5.6 was able to accomplish this task.

 For $n\ge 1$ and
$1 \le k \le n$, write
\begin{equation}
\label{eq:C5-normalization}
C_{n,\,k}=\stirlingcycle{n}{k}^{\!(5)}, \qquad \widetilde{C}_{n,\,k}=\frac{C_{n,\,k}}{(n+4k)!},
\end{equation}
and set $\widetilde C_{n,\,k}=0$ for $k<1$ or $k>n$.
Dividing~\eqref{eq.rec.cyc}  by $(n+4k)!$ yields for $n \ge 2$ and $1 \le k \le n$,
\begin{equation}
\label{eq:A-recurrence}
 \widetilde{C}_{n,\,k}
 =\alpha_{n,\,k}\widetilde{C}_{n-1,\,k}
  +\beta_{n,\,k}\widetilde{C}_{n-1,\,k-1},
\end{equation}
where
\begin{align}
\label{eq.alpha.beta.cycle}
    \alpha_{n,\,k}=\frac{n+4k-1}{n+4k},\qquad \beta_{n,\,k}=\frac1{n+4k}.
\end{align}
Put $N=n+4k$ and define
\begin{equation*}
 \rho^{\mathrm C}_{n,\,k}
 =\frac{(N-4)!(N+4)!}{(N!)^2}.
\end{equation*}
By~\eqref{eq:C5-normalization}, the desired inequality
$C_{n,\,k}^2\ge C_{n,\,k-1}C_{n,\,k+1}$ is equivalent to
\begin{equation}
\label{eq:A-required-strong-LC}
 \widetilde{C}_{n,\,k}^2
 \ge\rho^{\mathrm C}_{n,\,k}
    \widetilde{C}_{n,\,k-1}\widetilde{C}_{n,\,k+1}.
\end{equation}
However, the following stronger version of~\eqref{eq:A-required-strong-LC}
is easier to justify by an inductive argument.

\begin{theorem}
\label{thm.stronger.c}
For $n\ge 3$ and  $2\le k\le n-1$, we have
\begin{align}
\label{eq:A-Lambda-LC}
 \widetilde{C}_{n,\,k}^2
 \ge\lambda^{\mathrm C}_{n,\,k}
    \widetilde{C}_{n,\,k-1}\widetilde{C}_{n,\,k+1},
\end{align}
where
\begin{equation}
\label{eq:C5-Lambda}
 \lambda^{\mathrm C}_{n,\,k}
 =\frac{(n-k+1)(2kn+23k^2-3k-26)}
        {(k-1)(n-k)(2n+23k-26)}.
\end{equation}
\end{theorem}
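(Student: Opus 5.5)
We prove \eqref{eq:A-Lambda-LC} by induction on $n$, using the normalized recurrence \eqref{eq:A-recurrence}. For the base, we check $n=3$ (only $k=2$) and, if the boundary analysis below requires it, also $n=4$, directly from the explicit values of $C_{n,\,k}$. For the inductive step, fix $n\ge4$ and $2\le k\le n-1$, and assume \eqref{eq:A-Lambda-LC} for row $n-1$ at all interior indices. Write $x_j=\widetilde C_{n-1,\,j}$. Then
\[
\widetilde C_{n,\,j}=\alpha_{n,\,j}x_j+\beta_{n,\,j}x_{j-1}, \qquad j=k-1,k,k+1.
\]
The target inequality $\widetilde C_{n,\,k}^2-\lambda^{\rm C}_{n,\,k}\widetilde C_{n,\,k-1}\widetilde C_{n,\,k+1}\ge0$ is a quadratic form in the four positive quantities $x_{k-2},x_{k-1},x_k,x_{k+1}$; the term in $x_{k-2}$ is absent when $k=2$.

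The strategy follows the proof of Sagan's criterion. We reduce this form to an expression in the two ratios $u=x_{k-1}/x_k$ and $v=x_{k-2}/x_{k-1}$, or in a single ratio, using the inductive hypotheses
\[
x_{k-1}^2\ge\lambda^{\rm C}_{n-1,\,k-1}\,x_{k-2}x_k,\qquad x_k^2\ge\lambda^{\rm C}_{n-1,\,k}\,x_{k-1}x_{k+1}.
\]
Concretely, $\widetilde C_{n,\,k+1}$ contains $x_{k+1}$ and $\widetilde C_{n,\,k-1}$ contains $x_{k-2}$. Since these terms enter the product with a negative sign, we bound them from above by
\[
x_{k+1}\le x_k^2/(\lambda^{\rm C}_{n-1,\,k}x_{k-1}),\qquad x_{k-2}\le x_{k-1}^2/(\lambda^{\rm C}_{n-1,\,k-1}x_k).
\]
The mixed product $x_{k+1}x_{k-2}$ is bounded by the product of the two bounds. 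After these substitutions and multiplication by $x_{k-1}x_k$ (or a suitable power), the claim reduces to nonnegativity of a polynomial $P(t)$ in $t=x_{k-1}/x_k>0$ of low degree, with coefficients that are rational functions of $n$ and $k$. We then show that $P$ is nonnegative for all $t>0$. Ideally every coefficient is nonnegative on the region $4\le n$, $2\le k\le n-1$. Failing that, $P$ is a quadratic-type form with nonnegative extreme coefficients, and a discriminant condition in the spirit of Lemma~\ref{lem:quadratic-log-concavity} suffices.

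The boundary indices are handled separately. For $k=2$ the term $x_0$ vanishes, and for $k=n-1$ the term $x_{k+1}=x_n$ vanishes because $\widetilde C_{n-1,\,n}=0$; in these cases only one inductive inequality is used. Here we must check that $\lambda^{\rm C}$ is well defined and that the specific shape of \eqref{eq:C5-Lambda} is what makes these edge cases work. The factors $(k-1)$ and $(n-k)$ in the denominator blow up exactly at $k=1$ and $k=n$, and this matches the vanishing of neighbours there. We also need the comparison $\lambda^{\rm C}_{n,\,k}\ge\rho^{\rm C}_{n,\,k}$, so that Theorem~\ref{thm.stronger.c} implies \eqref{eq:A-required-strong-LC}; this is a separate rational-function inequality, checked by clearing denominators.

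The main obstacle is the polynomial inequality in the inductive step. The coefficients are high-degree polynomials in $(n,k)$, and the choice of $\lambda^{\rm C}$ is delicate, so the positivity is likely tight and not term-by-term obvious. We will first substitute $n=k+1+m$ with $m\ge0$ and $k=2+\ell$ with $\ell\ge0$, expand with computer algebra, and hope that all coefficients of the resulting polynomial in $(\ell,m,t)$ are nonnegative. If some are negative, we will group terms by completing a square in $t$, or split off the finitely many small $(n,k)$ where the grouping fails and verify those numerically.
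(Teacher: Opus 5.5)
Your reduction is the same as the paper's. Bounding $x_{k-2}$ and $x_{k+1}$ by the two inductive inequalities turns $\widetilde C_{n,k}^2-\lambda^{\mathrm C}_{n,k}\widetilde C_{n,k-1}\widetilde C_{n,k+1}$ into $x_k^2\,Q_{n,k}(t)$, where $Q_{n,k}$ is exactly the quadratic of Lemma~\ref{lem:Lambda-quadratic}.

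\textbf{The gap.} The step that fails is your plan to show $Q_{n,k}(t)\ge 0$ for \emph{all} $t>0$. That statement is false. The failures are not confined to finitely many small $(n,k)$, so no coefficientwise positivity, completing of the square, discriminant condition, or finite numerical check can close it.

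\textbf{A concrete failure.} At $(n,k)=(60,10)$, a direct computation gives
\[
[t^2]Q_{60,10}\approx 1.47\cdot 10^{-6},\qquad [t]Q_{60,10}\approx -8.03\cdot 10^{-5},\qquad [t^0]Q_{60,10}\approx 7.84\cdot 10^{-4}.
\]
The discriminant is therefore positive, and $Q_{60,10}(t)<0$ for $t$ roughly in $(12.8,\,41.7)$; for instance $Q_{60,10}(25)\approx -3\cdot 10^{-4}$.

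\textbf{Infinitely many bad $(n,k)$.} An asymptotic expansion along $n\approx 6k$ shows the same phenomenon for all large $n$: the negative interval is approximately $t/(n+4k)\in(0.14,\,0.42)$.

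\textbf{Why the induction alone cannot work.} Your upper bounds for $\widetilde C_{n,k\pm1}$ are attained when both inductive inequalities hold with equality, so the reduction loses nothing locally. Hence the two inductive hypotheses by themselves cannot carry the induction. You need additional global information about row $n-1$ that restricts the ratio $t$.

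\textbf{The missing ingredient.} What you need is an a priori upper bound on $t=\widetilde C_{n-1,k-1}/\widetilde C_{n-1,k}$; the paper obtains it in Lemma~\ref{lem:A-ratio-bound}.
\begin{itemize}
\item It writes $k!\,\widetilde C_{n,k}$ as a sum of weights $\prod_j (e_j+4)^{-1}$ over compositions $(e_1,\ldots,e_k)$ of $n$.
\item A merge--split double count gives $(n-k+1)\Phi_{n,k-1}=\sum w\sum_i \mu(e_i,e_{i+1})$, where $\Phi_{n,k}=k!\,\widetilde C_{n,k}$.
\item An explicit upper bound on $\sum_i\mu(e_i,e_{i+1})$ then yields $t\le U_{n-1,k}$.
\end{itemize}
In the example above, $U_{59,10}\approx 12.16$, just below the interval where $Q_{60,10}$ is negative.

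\textbf{How the paper finishes.} It proves $Q_{n,k}\ge 0$ only on $[0,U_{n-1,k}]$, by checking three facts: $[u^2]Q_{n,k}>0$, $Q_{n,k}'(U_{n-1,k})<0$, and $Q_{n,k}(U_{n-1,k})>0$. The boundary cases $k=2$ and $k=n-1$ are handled in the same way. Without such a ratio bound, or some other input beyond the two inductive inequalities, your plan cannot be completed.
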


To see that the above inequality implies 
the log-concavity, we show that $\lambda^{\mathrm C}_{n,\,k}$ is greater than
the factor $\rho^{\mathrm C}_{n,\,k}$ in~\eqref{eq:A-required-strong-LC}.

\begin{lemma}
\label{lem:Lambda-dominates-R}
For $n\ge3$ and $2\le k\le n-1$, we have
\[
 \lambda^{\mathrm C}_{n,\,k}\ge \rho^{\mathrm C}_{n,\,k}.
\]
\end{lemma}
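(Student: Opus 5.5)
The plan is to reduce the claim to a polynomial inequality in $n$ and $k$ and check it by substitution.

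\textbf{Simplifying $\rho^{\mathrm C}_{n,\,k}$.} With $N=n+4k$, cancelling factorials gives
\[
\rho^{\mathrm C}_{n,\,k}=\frac{(N+1)(N+2)(N+3)(N+4)}{N(N-1)(N-2)(N-3)} .
\]
Expanding numerator and denominator yields
\[
\rho^{\mathrm C}_{n,\,k}-1=\frac{16N^3+24N^2+56N+24}{N(N-1)(N-2)(N-3)} .
\]
Under $n\ge3$, $2\le k\le n-1$ we have $N\ge 11$, so $\rho^{\mathrm C}_{n,\,k}-1$ is of order $16/N$ and is explicitly bounded.

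\textbf{Splitting $\lambda^{\mathrm C}_{n,\,k}$.} Write $\lambda^{\mathrm C}_{n,\,k}=\bigl(1+\tfrac{1}{n-k}\bigr)\bigl(1+E\bigr)$. A direct subtraction gives
\[
(2kn+23k^2-3k-26)-(k-1)(2n+23k-26)=2(n+23k-26),
\]
so
\[
E=\frac{2(n+23k-26)}{(k-1)(2n+23k-26)}>0
\]
in our range. Consequently
\[
\lambda^{\mathrm C}_{n,\,k}-1\ \ge\ \frac1{n-k}+E+\frac{E}{n-k}.
\]
Heuristically this covers both regimes. When $k$ is small relative to $n$, $E\approx 1/(k-1)$ already dominates $16/(n+4k)$. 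When $k$ is close to $n$, the term $1/(n-k)$ dominates.

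\textbf{Reduction to a polynomial inequality.} To make this rigorous, I would clear all (positive) denominators. The inequality $\lambda^{\mathrm C}_{n,\,k}\ge\rho^{\mathrm C}_{n,\,k}$ is equivalent to
\[
(n-k+1)(2kn+23k^2-3k-26)\,N(N-1)(N-2)(N-3)\ \ge\ (k-1)(n-k)(2n+23k-26)\,(N+1)(N+2)(N+3)(N+4),
\]
with $N=n+4k$. This is a polynomial inequality $P(n,k)\ge0$ of total degree $7$. I would substitute $k=2+j$ and $n=k+1+m$ with $j,m\ge0$, which parametrizes exactly the range $n\ge3$, $2\le k\le n-1$. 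Then I would expand $P$ as a polynomial in $j,m$ using computer algebra and check that all coefficients are nonnegative, which would finish the proof.

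\textbf{Main obstacle.} The hard step is that some low-order coefficients may come out negative. This is the delicate regime where both $k$ and $n-k$ are small; for example $n=3$, $k=2$ gives $N=11$. If that happens, my fallback is to group each negative monomial with a dominating positive one, using $j,m\ge0$, for instance absorbing $-c\,jm$ into $a\,j^2m+b\,jm^2$ whenever $j+m\ge$ some small bound. The finitely many remaining pairs $(j,m)$ would then be checked numerically.
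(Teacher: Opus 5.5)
Your reduction is the same as the paper's: clear the positive denominator $(k-1)qN(N-1)(N-2)(N-3)(2n+23k-26)$ with $q=n-k$, and show that the numerator is nonnegative. But the step meant to finish the proof fails, and not for the reason you anticipate.

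The degree-$7$ terms cancel, since both sides have top part $q(2kn+23k^2)N^4$. So the numerator has degree $6$, and its top homogeneous part factors as
\[
(5k+q)^3\bigl(125k^3-125k^2q+28kq^2+2q^3\bigr)=15625k^6-6250k^5q-4000k^4q^2+600k^3q^3+445k^2q^4+58kq^5+2q^6 .
\]
A translation $k=2+j$, $q=1+m$ does not change top-degree coefficients. So after your substitution the coefficients of $j^5m$ and $j^4m^2$ are $-6250$ and $-4000$, whatever happens in lower degrees.

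The difficulty is therefore not near $(n,k)=(3,2)$ but along a whole ray. With $t=q/k$, the cubic $2t^3+28t^2-125t+125$ is positive on $t\ge0$, but its minimum there is only about $2.24$, near $t\approx1.86$, against coefficients of size $125$. Equivalently, for $q=tk$ and $k\to\infty$, your heuristic quantities satisfy $k(\lambda^{\mathrm C}_{n,\,k}-1)\to\frac1t+\frac{2(24+t)}{25+2t}$ and $k(\rho^{\mathrm C}_{n,\,k}-1)\to\frac{16}{5+t}$. At $t\approx1.86$ these limits are about $2.338$ and $2.332$. So the \emph{two regimes} picture hides an almost tight intermediate regime.

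Your fallback cannot repair this. There are no higher-degree monomials into which $-6250j^5m$ and $-4000j^4m^2$ could be absorbed. Any compensation must come from the other degree-$6$ monomials, that is, from a proof that the binary sextic is positive on the quadrant. That is an issue along an unbounded ray, not at finitely many small pairs $(j,m)$.

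What is missing is a positivity certificate adapted to the ratio $q/k$. The paper splits the range into $q=1$, $2\le q\le k$, $k<q<15k/8$, and $q\ge 15k/8$; the breakpoint $15/8$ lies just beyond the near-root $t\approx1.86$. In each region it uses a tailored affine substitution, for instance $q=15k/8+b$, $k=a+2$ in the last one, after which all coefficients in $a,b\ge0$ are nonnegative.

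If you want to keep your idea of a large-$(j+m)$ argument plus a finite check, you would need two further steps. First, prove that the sextic form above is at least $c\,(j+m)^6$ on the quadrant for an explicit $c>0$, using the factorization and the minimum of the cubic. Second, dominate the lower-degree terms to get an explicit threshold. The proposal contains neither step.
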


\proof
For $n\ge3$ and $2\le k\le n-1$, put $q=n-k\ge1$ and $N=n+4k$. The difference
$\lambda^{\mathrm C}_{n,\,k}-\rho^{\mathrm C}_{n,\,k}$ has a positive common denominator
\[
 (k-1)qN(N-1)(N-2)(N-3)(2n+23k-26).
\]
After the substitution $n=k+q$, the corresponding numerator becomes
\begin{align*}
 P^{\mathrm C}_{k,\,q}
 =&15625k^6-6250k^5q-20625k^5\\
 &\qquad -4000k^4q^2+4250k^4q-7125k^4\\
&\qquad +600k^3q^3+5300k^3q^2+15450k^3q+17925k^3\\
&\qquad +445k^2q^4+150k^2q^3+6375k^2q^2
   -6500k^2q-7060k^2\\
 & \qquad +58kq^5-275kq^4+300kq^3-6645kq^2
   -7338kq+780k\\
 &\qquad +2q^6-32q^5-60q^4-940q^3-1382q^2-468q.
\end{align*}
To show that $P^{\mathrm C}_{k,\,q}$ is nonnegative 
for $k\ge2$ and $q\ge1$, we may make certain
substitutions to transform the polynomial $P^{\mathrm C}_{k,\,q}$ 
into
a polynomial with nonnegative coefficients in nonnegative parameters. 
The specific substitutions for various regions are given
in the following table. For example, for $2 \le q \le k$, via the substitutions $k=q+a$ and $q=b+2$,
$P^{\mathrm C}_{k,\,q}$ becomes a polynomial in $a$ and $b$ with nonnegative coefficients, where $a \ge 0$ and $b \ge 0$.

\begin{center}
\begin{tabular}{|l|l|}
\hline
Range & Substitution \\ \hline
$q=1$ & $k=a+2$ \\
$2\le q\le k$ & $k=q+a,\quad q=b+2$ \\
$k<q<15k/8$ &
$q=k+t,\quad k=8t/7+a,\quad t=b+1$ \\
$q\ge15k/8$ & $q=15k/8+b,\quad k=a+2$ \\
\hline
\end{tabular}
\end{center}

Thus the numerator of $\lambda^{\mathrm C}_{n,\,k}-\rho^{\mathrm C}_{n,\,k}$ is nonnegative for $n \ge 3$ and $2 \le k \le n -1$, and so the proof is complete.
\qed

In order to prove \eqref{eq:A-Lambda-LC} by induction, 
as the first step, we obtain an upper bound of the ratio $\widetilde{C}_{n,\,k-1}/\widetilde{C}_{n,\,k}$.

\begin{lemma}
\label{lem:A-ratio-bound}
For $n\ge2$ and $2\le k\le n$, we have
\[
 \frac{\widetilde{C}_{n,\,k-1}}{\widetilde{C}_{n,\,k}}\le U_{n,\,k},
\]
where
\begin{equation}
\label{eq:C5-ratio-upper-bound}
 U_{n,\,k}
 =\frac{k}{n-k+1}
 \left[
 \frac{25(k-1)}6+\frac{n-k}{2}
 -\frac{4(n-k)}{3(n-k+6)}
 \right].
\end{equation}
\end{lemma}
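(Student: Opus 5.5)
We prove the bound by induction on $n$, using the normalized recurrence~\eqref{eq:A-recurrence}. Put $R_{n,k}=\widetilde C_{n,k-1}/\widetilde C_{n,k}$ for $2\le k\le n$. For the base case $n=2$, $k=2$, we compute directly from~\eqref{rec.cycle}: $C_{1,1}=4!=24$, $C_{2,1}=5\cdot 24=120$, and $C_{2,2}=4!\binom{9}{4}\cdot 24$. Hence $R_{2,2}=\bigl(120/6!\bigr)\big/\bigl(C_{2,2}/10!\bigr)$, and we check this against $U_{2,2}=2\cdot\frac{25}{6}=\frac{25}{3}$.

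For the inductive step, fix $n\ge3$ and $2\le k\le n$, and assume $R_{n-1,j}\le U_{n-1,j}$ for $2\le j\le n-1$. By~\eqref{eq:A-recurrence},
\[
R_{n,k}=\frac{\alpha_{n,k-1}\widetilde C_{n-1,k-1}+\beta_{n,k-1}\widetilde C_{n-1,k-2}}{\alpha_{n,k}\widetilde C_{n-1,k}+\beta_{n,k}\widetilde C_{n-1,k-1}}.
\]
Dividing numerator and denominator by $\widetilde C_{n-1,k-1}$ and setting $x=R_{n-1,k}$, $y=R_{n-1,k-1}$ (with the conventions below at the endpoints) gives
\[
R_{n,k}=\frac{\alpha_{n,k-1}+\beta_{n,k-1}\,y}{\alpha_{n,k}/x+\beta_{n,k}}.
\]
This expression is increasing in both $x$ and $y$, since all coefficients are positive. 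Hence, in the interior range $3\le k\le n-1$, the induction hypothesis lets us replace $x$ by $U_{n-1,k}$ and $y$ by $U_{n-1,k-1}$.

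Two endpoints need separate treatment.
\begin{itemize}
\item For $k=2$, the term $\widetilde C_{n-1,0}$ vanishes. So $y$ drops out and only $x=R_{n-1,2}\le U_{n-1,2}$ is used (for $n-1\ge2$).
\item For $k=n$, $\widetilde C_{n-1,n}=0$, so $R_{n,n}=(\alpha_{n,n-1}+\beta_{n,n-1}y)/\beta_{n,n}$ with $y=R_{n-1,n-1}\le U_{n-1,n-1}$.
\end{itemize}
In each case the claim reduces to a single explicit rational inequality in $n$ and $k$: the bound obtained after substitution must be at most $U_{n,k}$.

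To verify these inequalities, clear the positive denominators. These are $N=n+4k$ and its shifts, $n-k+1$, $n-k+6$, and their shifted versions. Substitute $q=n-k$ as in the proof of Lemma~\ref{lem:Lambda-dominates-R}, which leaves a polynomial in $(k,q)$ that must be shown nonnegative for $k\ge3$, $q\ge1$. The same substitution technique then applies: split into a few regions, for example $q\le k$ and $q\ge k$ with suitable linear shifts, so that each case becomes a polynomial with nonnegative coefficients in nonnegative parameters. The two endpoint inequalities are one-variable after setting $k=2$ or $q=0$, and should be routine.

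The main obstacle is this interior polynomial inequality. The correction term $-\frac{4(n-k)}{3(n-k+6)}$ in $U_{n,k}$ was evidently tuned to make the induction close. As a result the inequality is likely tight asymptotically, for instance as $q\to\infty$ with $k$ fixed, or along $q\approx ck$. Finding a region decomposition that makes every piece coefficientwise nonnegative may therefore require some experimentation. If a piece resists, I would first check that the leading homogeneous part is positive on the whole quadrant. I would then handle small $k$ or small $q$ separately, either by direct substitution $k=a+3$ or by finite computation.
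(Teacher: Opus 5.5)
Your reduction is sound, and it is a genuinely different route from the paper's. The paper does not induct. It writes $\Phi_{n,k}=k!\,\widetilde C_{n,k}$ as a sum over compositions $e_1+\cdots+e_k=n$ of the weight $w(e)=\prod_j 1/(e_j+4)$. Merging and splitting adjacent parts then gives $\sum_e w(e)\sum_{i=1}^{k-1}\mu(e_i,e_{i+1})=(n-k+1)\Phi_{n,k-1}$, where $\mu(s,t)=\frac{(s+4)(t+4)}{s+t+4}$. Finally it bounds $\sum_i\mu(e_i,e_{i+1})$ pointwise by the bracket in $U_{n,k}$, using $\mu(1+x,1+y)=\frac{25}{6}+\frac{x+y}{4}-\frac{3(x-y)^2+8(x+y)}{12(x+y+6)}$.

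That argument handles every $(n,k)$ at once and needs no two-variable positivity certificate. It also explains the shape of $U_{n,k}$: $\frac{25}{6}=\mu(1,1)$, and the correction $-\frac{4q}{3(q+6)}$ comes from the subtracted term in $\mu$, not from tuning an induction. Your approach needs only the recurrence and monotonicity, and it shows that $U$ propagates under the recurrence, which is the same mechanism the paper later uses for Theorem~\ref{thm.stronger.c}. The price is a rational inequality with very little room, and you have not yet verified it.

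The inequality does hold, but it is tight, so plan for that. Write $N=n+4k$ and $B_{n,k}=\frac{N(N-5+U_{n-1,k-1})\,U_{n-1,k}}{(N-4)(N-1+U_{n-1,k})}$.
\begin{itemize}
\item At $k=n$ the step is an exact identity. The true ratio is $\widetilde C_{n,n-1}/\widetilde C_{n,n}=\frac{25n(n-1)}{6}=U_{n,n}$, and $B_{n,n}=U_{n,n}$.
\item At $k=2$, clearing denominators reduces the inequality to $9n^4+114n^3+175n^2-986n+216\ge0$.
\item In the interior with $q=n-k=1$, one finds exactly $U_{n,k}-B_{n,k}=\frac{(k-1)/4+13/42}{5k-3}$.
\item For fixed $q$, both $U_{n,k}$ and $B_{n,k}$ are of order $k^2$. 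Replace $\frac{25}{6}$ in $U$ by a constant $A$; then the coefficient of $k$ in $U_{n,k}-B_{n,k}$ is $\frac{1}{q+1}\bigl(\frac{6A}{5}-5\bigr)$. So $\frac{25}{6}$ is exactly critical for your induction.
\item Positivity therefore rests on the constant-order term. As $k\to\infty$ it tends to a positive limit for every $q\ge1$, increasing from $\frac{1}{20}$ at $q=1$ to $\frac{11}{30}$ as $q\to\infty$.
\end{itemize}
A substitution certificate of the kind you describe does exist, but it must resolve these lower-order terms; checking leading forms alone will not suffice.
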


\proof
Put $\Phi_{n,\,k}=k!\widetilde{C}_{n,\,k}$. By the exponential generating function for the $5$-associated Stirling
cycle numbers, we get
\begin{equation*}
\widetilde{C}_{n,\,k}=\frac1{k!}[z^n]\left(\sum_{j\ge1}\frac{z^j}{j+4}\right)^k.
\end{equation*}
Thus,
\begin{equation*}
 \Phi_{n,\,k}
=\sum_{\substack{e_1+\cdots+e_k=n\\e_j\ge1}}
   w(e_1,\ldots,e_k),
\end{equation*}
where 
\begin{align*}   
w(e_1,\ldots,e_k)=\prod_{j=1}^k\frac1{e_j+4}.
\end{align*}
Since
\begin{align}
\label{eq.c.phi}
 \frac{\widetilde{C}_{n,\,k-1}}{\widetilde{C}_{n,\,k}}
 =k\frac{\Phi_{n,\,k-1}}{\Phi_{n,\,k}},
\end{align}
in order to bound  $\widetilde{C}_{n,\,k-1}/\widetilde{C}_{n,\,k}$,
it suffices to find an upper bound of $\Phi_{n,\,k-1}/\Phi_{n,\,k}$.
To this end, we shall employ a double counting argument 
to establish the following relation
\begin{equation}
\label{eq:merge-split-identity}
\sum_{\substack{e_1+\cdots+e_k=n\\e_i\ge1}}
 w(e_1,\ldots,e_k)
 \sum_{i=1}^{k-1}\mu(e_i,e_{i+1})
 =(n-k+1)\Phi_{n,\,k-1}.
\end{equation}

Given a $k$-part composition $(e_1,\ldots,e_i, e_{i+1},\ldots,e_k)$ of $n$ with   $e_j \ge 1$ for each $j$, one can merge 
two adjacent parts to form a composition $(e_1,\ldots,e_i+e_{i+1},\ldots,e_k)$ of $n$ with $k-1$ parts, where $1\le i\le k-1$.
For positive integers $s,t$, define
\[
 \mu(s,t)=\frac{(s+4)(t+4)}{s+t+4}.
\]
Then we have 
\begin{align*}
w(e_1,\ldots,e_i+e_{i+1},\ldots,e_k)=w(e_1,\ldots,e_i,e_{i+1},\ldots,e_k)\mu(e_i,e_{i+1}).
\end{align*}
On the other hand, fix a $(k-1)$-part composition $(f_1,\ldots,f_{k-1})$ of $n$. For each part, there are $f_j-1$ ways to 
split $f_j$ into two positive parts. In particular, if $f_j=1$, then there is 
no way to split it into two positive parts. Clearly, the number of
ways to retrieve a $k$-part composition of $n$ by splitting one part of a $(k-1)$-part composition $(f_1, f_2, \ldots, f_{k-1})$ of $n$ equals 
\[
 \sum_{j=1}^{k-1}(f_j-1)=n-k+1. \]
Therefore, \eqref{eq:merge-split-identity} is verified.

We move on to deduce an upper bound of 
\[ \sum_{i=1}^{k-1}\mu(e_i,e_{i+1}).\]
Write $e_i=1+x_i$, where $x_i\ge0$, and put
\[
 q=\sum_{i=1}^k x_i=n-k,
 \qquad t=x_1+x_k.
\]
A direct calculation shows that
\[
 \mu(1+x,1+y)
 =\frac{25}{6}+\frac{x+y}{4}
 -\frac{3(x-y)^2+8(x+y)}{12(x+y+6)}.
\]
Since $\sum_{i=1}^{k-1}(x_i+x_{i+1})=2q-t$ and
$x_i+x_{i+1}\le q$, we find that
\begin{align*}
 &
 \sum_{i=1}^{k-1}
 \frac{3(x_i-x_{i+1})^2+8(x_i+x_{i+1})}
      {12(x_i+x_{i+1}+6)}\\
 &\qquad\ge\frac{2(2q-t)}{3(q+6)}\\
 &\qquad\ge\frac{4q}{3(q+6)}-\frac{t}{4}.
\end{align*}
The last inequality follows from the assumption 
$t\ge 0$ and the fact that
\[
 \frac14-\frac{2}{3(q+6)}=\frac{3q+10}{12(q+6)}>0.
\]
Consequently,
\[
 \sum_{i=1}^{k-1}\mu(e_i,e_{i+1})
 \le\frac{25(k-1)}6+\frac q2-\frac{4q}{3(q+6)}.
\]
Substituting this bound into~\eqref{eq:merge-split-identity} yields 
\[
 \left(\frac{25(k-1)}6+\frac q2-\frac{4q}{3(q+6)}\right)\Phi_{n,\,k}\ge(n-k+1)\Phi_{n,\,k-1}.\]
By \eqref{eq.c.phi}, we get
an upper bound of $\widetilde{C}_{n,\,k-1}/\widetilde{C}_{n,\,k}$, as required. 
\qed

As the second step in the inductive proof of~\eqref{eq:A-Lambda-LC}, we 
prepare the following inequalities.

\begin{lemma}
\label{lem:Lambda-quadratic}
Given $n \ge 4$, let $\alpha_{n,\,k}$ and $\beta_{n,\,k}$ be the coefficients defined in~\eqref{eq.alpha.beta.cycle}, and let $\lambda^{\mathrm C}_{n,\,k}$ and $U_{n,\,k}$ be the functions defined in~\eqref{eq:C5-Lambda} and~\eqref{eq:C5-ratio-upper-bound}, respectively. 
For $3 \le k \le n-2$, define
\begin{align}
 Q_{n,\,k}(u)
 =&(\alpha_{n,\,k}+\beta_{n,\,k}u)^2\notag\\
 &-\lambda^{\mathrm C}_{n,\,k}
 \left(\alpha_{n,\,k-1}
       +\frac{\beta_{n,\,k-1}}{\lambda^{\mathrm C}_{n-1,\,k-1}}u\right)
 \left(\beta_{n,\,k+1}u
       +\frac{\alpha_{n,\,k+1}}{\lambda^{\mathrm C}_{n-1,\,k}}\right).
 \label{eq:Qnk-interior}
\end{align}
For the cases $k=2$ and $k=n-1$, define, respectively,
\begin{align}
 Q_{n,\,2}(u)
 =&(\alpha_{n,\,2}+\beta_{n,\,2}u)^2\notag\\
 &-\lambda^{\mathrm C}_{n,\,2}\alpha_{n,\,1}
 \left(\beta_{n,\,3}u+\frac{\alpha_{n,\,3}}{\lambda^{\mathrm C}_{n-1,\,2}}\right),
 \label{eq:Qnk-lower-boundary}\\
 Q_{n,\,n-1}(u)
 =&(\alpha_{n,\,n-1}+\beta_{n,\,n-1}u)^2\notag\\
 &-\lambda^{\mathrm C}_{n,\,n-1}\beta_{n,\,n}u
 \left(\alpha_{n,\,n-2}
       +\frac{\beta_{n,\,n-2}}{\lambda^{\mathrm C}_{n-1,\,n-2}}u\right).
 \label{eq:Qnk-upper-boundary}
\end{align}
Then, for $n \ge 4$ and $2\le k\le n-1$, if $0\le u\le U_{n-1,\,k}$, then
\[
 Q_{n,\,k}(u)\ge0.
\]
\end{lemma}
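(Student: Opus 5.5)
The plan is to treat $Q_{n,\,k}(u)$ as a quadratic in $u$ with explicit rational-function coefficients in $n,k$, and to show nonnegativity on the interval $[0,U_{n-1,\,k}]$ by checking the endpoints and the shape of the parabola. Expanding \eqref{eq:Qnk-interior}, the coefficient of $u^2$ is
\[
 \beta_{n,\,k}^2-\lambda^{\mathrm C}_{n,\,k}\frac{\beta_{n,\,k-1}\beta_{n,\,k+1}}{\lambda^{\mathrm C}_{n-1,\,k-1}}.
\]
I expect this to be of either sign, so I will not rely on convexity. Instead I will use concavity or convexity case by case. The cleanest route is to prove $Q_{n,\,k}(0)\ge0$ and $Q_{n,\,k}(U_{n-1,\,k})\ge0$, and then handle the interior as follows. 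If the leading coefficient is $\le 0$, the quadratic is concave, so nonnegativity at both endpoints suffices. If it is positive, I will additionally show that the linear coefficient is nonnegative, or that the vertex lies outside $[0,U_{n-1,\,k}]$; either makes $Q$ monotone on the interval.

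Concretely, I first substitute $\alpha_{n,\,k}=(N-1)/N$ and $\beta_{n,\,k}=1/N$ with $N=n+4k$, together with \eqref{eq:C5-Lambda} for $\lambda^{\mathrm C}_{n,\,k}$, $\lambda^{\mathrm C}_{n-1,\,k-1}$ and $\lambda^{\mathrm C}_{n-1,\,k}$, and \eqref{eq:C5-ratio-upper-bound} for $U_{n-1,\,k}$. Clearing the positive denominators reduces each endpoint inequality to a polynomial inequality $P(k,q)\ge0$ in $k$ and $q=n-k$. These denominators are products of factors such as $(k-1)$, $(n-k)$, $(2n+23k-26)$, $(n-k+5)$ and the $N\pm j$, all positive in the range. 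Exactly as in the proof of Lemma~\ref{lem:Lambda-dominates-R}, I will then split the $(k,q)$ domain into regions (small $q$, $q\le k$, $k<q<ck$, $q\ge ck$ for a suitable constant $c$) and use shift substitutions $k=a+k_0$, $q=b+q_0$, $q=k+t$, and so on. The aim in each region is to obtain polynomials in nonnegative variables with nonnegative coefficients. Any finitely many small cases left over, namely small $n$ where a substitution fails to give positive coefficients, I will check by direct evaluation.

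The boundary cases $k=2$ and $k=n-1$ are treated separately using \eqref{eq:Qnk-lower-boundary} and \eqref{eq:Qnk-upper-boundary}. For $k=2$, $Q_{n,\,2}$ is a quadratic with positive leading coefficient $\beta_{n,\,2}^2$ and a constant term that must be shown nonnegative. For $k=n-1$, the constant term $\alpha_{n,\,n-1}^2$ is positive. In both cases the variables reduce to the single parameter $n$, so the endpoint checks become univariate polynomial inequalities. Each can be settled by the substitution $n=m+4$ followed by checking coefficient signs.

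The main obstacle is the upper endpoint $u=U_{n-1,\,k}$ in the interior range $3\le k\le n-2$. The bound $U$ is presumably tuned to be nearly tight, so $Q_{n,\,k}(U_{n-1,\,k})$ should be a small positive quantity whose numerator is a high-degree bivariate polynomial with many sign changes. Finding a region decomposition that makes all coefficients nonnegative will likely require experimentation. I would first try the same four regions as in Lemma~\ref{lem:Lambda-dominates-R}, adjusting the slope $15/8$, and refine further where needed, for instance by introducing $q=ck+b$ for several values of $c$.
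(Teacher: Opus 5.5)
Your plan is sound and is essentially the paper's argument. The paper proves $[u^2]Q_{n,\,k}>0$, $Q_{n,\,k}'(U_{n-1,\,k})<0$ and $Q_{n,\,k}(U_{n-1,\,k})>0$, which is exactly your ``positive leading coefficient, vertex to the right of the interval'' branch; the reductions are the ones you describe (clear positive denominators, then substitute $k=a+3$, $q=b+2$, with a split at $q=k/2$ for the third test, and univariate expressions in $n-4$ for $k=2$ and $k=n-1$). The leading coefficient is in fact always positive and the linear coefficient is always negative, so your ``nonnegative linear coefficient'' option never applies, and your check $Q_{n,\,k}(0)\ge0$ is redundant because $Q_{n,\,k}$ is decreasing on $[0,U_{n-1,\,k}]$.
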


\proof
To prove the lemma, we proceed to verify
\begin{equation}
\label{eq:C5-three-quadratic-tests}
 [u^2]Q_{n,\,k}(u)>0,
 \qquad
 Q_{n,\,k}'(U_{n-1,\,k})<0,
 \qquad
 Q_{n,\,k}(U_{n-1,\,k})>0.
\end{equation}
 The first inequality ensures the strict convexity of $Q_{n,\,k}(u)$, so $Q_{n,\,k}'(u)$ is increasing. Thus,  for $0\le u\le U_{n-1,\,k}$, 
\[Q_{n,\,k}'(u) \le Q_{n,\,k}'(U_{n-1,\,k})<0. \]
It follows that
\[Q_{n,\,k}(u)\ge Q_{n,\,k}(U_{n-1,\,k}) >0.\]

First,  consider the range $3\le k\le n-2$. Put $q=n-k\ge2$ and $N=n+4k$.
Then  $Q_{n,\,k}(u)$ has a positive common denominator
\begin{align*}
 D^{\mathrm C}_{n,\,k}
 =&N^2(N-4)(N+4)(k-1)q^2(2n+23k-26)\notag\\
 &\qquad \times (23k^2+2kn-51k-2n+2)
        (23k^2+2kn-5k-26),
\end{align*}
which is positive in this range. Define
\begin{align*}
 \mathcal P^{\mathrm C}_0(n,k)
   &=[u^2]D^{\mathrm C}_{n,\,k}Q_{n,\,k}(u),\\[6pt]
 \mathcal P^{\mathrm C}_1(n,k)
   &=-3(n-k+5)D^{\mathrm C}_{n,\,k}
      Q_{n,\,k}'(U_{n-1,\,k}),\\[6pt]
 \mathcal P^{\mathrm C}_2(n,k)
   &=9(n-k+5)^2D^{\mathrm C}_{n,\,k}
      Q_{n,\,k}(U_{n-1,\,k}),
\end{align*}
all of which can be simplified to polynomials in $n,k$.

Let $k=a+3$ and $q=b+2,$ where $a,b\ge0$. After the substitutions, the polynomials $\mathcal P^{\mathrm C}_0$ and $\mathcal P^{\mathrm C}_1$ have nonnegative coefficients and positive constant terms. 
Then we obtain the first two inequalities in~\eqref{eq:C5-three-quadratic-tests} .

To deal with $\mathcal P^{\mathrm C}_2$, we 
make use of the following substitutions depending
on subdivisions of the range, where $a,b\ge0$.
\begin{center}
    \begin{tabular}{|l|l|}
        \hline
    Range&Substitution
    \\ \hline
$q\le k/2$&$q=b+2,\quad k=a+2b+4$
\\
$q\ge k/2$&$k=a+3,\quad q=(a+3)/2+b$
\\\hline
    \end{tabular}
\end{center}

In each case, we get a polynomial with nonnegative coefficients and 
a positive constant term.
So we reach the conclusion that
$\mathcal P^{\mathrm C}_2(n,k)>0$, which implies 
the third inequality in~\eqref{eq:C5-three-quadratic-tests}.
This completes the proof of Lemma~\ref{lem:Lambda-quadratic} for~\eqref{eq:Qnk-interior}.

For $k=2$, we have
\begin{align*}
 &[u^2]Q_{n,\,2}(u)=\frac1{(n+8)^2}>0,\\[6pt]
 &Q_{n,\,2}'(U_{n-1,\,2})
 =-\frac{2n(3n^4+40n^3+170n^2+583n+2064)}
 {3(n-2)(n+3)(n+4)(n+8)^2(n+10)(n+12)}<0,
\end{align*}
and
\[
 Q_{n,\,2}(U_{n-1,\,2})
 =\frac{p_2(n)}
 {9(n-2)^2(n+3)^2(n+4)(n+8)^2(n+10)(n+12)(n+14)},
\]
where
\begin{align*}
 p_2(n)
=&27n^8+1212n^7+22330n^6+196630n^5+767474n^4\\
 &  \qquad+719996n^3-1835445n^2-2266704n+2566080.
\end{align*}
Writing $x=n-4\ge0$ gives
\begin{align*}
p_2(n)=&27x^8+2076x^7+68362x^6+1236550x^5\\
 &\qquad +13257994x^4+85450588x^3+321231835x^2\\
 &\qquad +641254216x+521124912>0.
\end{align*}
All the factors in $ Q_{n,\,2}(U_{n-1,\,2})$ are positive for $n\ge4$, so the three required inequalities hold for $k=2$. So the case for $Q_{n,\,2}$ in~\eqref{eq:Qnk-lower-boundary} 
is proved.

Finally, for $k=n-1$, we have
\begin{align*}
 [u^2]Q_{n,\,n-1}(u)
 &=\frac{4(5000n^3-30125n^2+54954n-29192)}
 {5(n-2)(5n-4)^2(5n-8)(25n-49)(n-1)(25n-76)},\\[6pt]
 Q_{n,\,n-1}'(U_{n-1,\,n-1})
 &=-\frac{4(1250n^3-3800n^2-5085n+15604)}
 {15(n-2)(5n-8)(5n-4)(25n-76)(25n-49)},\\[6pt]
 Q_{n,\,n-1}(U_{n-1,\,n-1})
 &=\frac{5(n-1)(125n^2-505n+367)}
 {9(5n-8)(25n-76)(25n-49)}.
\end{align*}
After we write $x=n-4\ge0$ and factor out the displayed minus sign in the second expression, all the numerators and denominators in the three expressions become polynomials in $x\ge 0$ with nonnegative coefficients and positive constant terms. Thus we have finished the
case for~\eqref{eq:Qnk-upper-boundary}, and so
the proof is complete. 
\qed

We are now ready to prove \eqref{eq:A-Lambda-LC}.

\noindent{\it Proof of Theorem~\ref{thm.stronger.c}.}
We proceed by induction on $n$. It is readily checked 
that the theorem holds for $n=3$. Assume that
it holds for $n-1$ with $n\geq 4$, that is, for $2\le k\le n-2$,
\begin{align}
\label{eq:A-Lambda-n-1}
 \widetilde{C}_{n-1,\,k}^2
 \ge\lambda^{\mathrm C}_{n-1,\,k}
    \widetilde{C}_{n-1,\,k-1}\widetilde{C}_{n-1,\,k+1}.
\end{align}

Consider the $n$th row of the
fifth-order Stirling cycle triangle. 
For notational convenience,  
write
\[
 a_k=\widetilde{C}_{n-1,\,k},
 \qquad b_k=\widetilde{C}_{n,\,k},
\]
and put
\[
 u=\frac{a_{k-1}}{a_k}.
\]
Both $a_{k-1}$ and $a_{k}$ in this ratio are positive for $2\le k\le n-1$.
Moreover, by Lemma~\ref{lem:A-ratio-bound}, $u$ is bounded by
\[
 0<u\le U_{n-1,\,k}.
\]

First, consider the range $3\le k\le n-2$.
From the induction hypothesis~\eqref{eq:A-Lambda-n-1}, we have
\[
 \frac{a_{k-2}}{a_{k-1}}
 \le\frac{u}{\lambda^{\mathrm C}_{n-1,\,k-1}},
 \qquad
 \frac{a_{k+1}}{a_k}
 \le\frac1{\lambda^{\mathrm C}_{n-1,\,k}u}.
\]
Using the recurrence~\eqref{eq:A-recurrence},
we  obtain
\begin{align*}
 b_{k-1}
 &\le a_ku\left(\alpha_{n,\,k-1}
       +\frac{\beta_{n,\,k-1}}{\lambda^{\mathrm C}_{n-1,\,k-1}}u\right),\\
 b_k
 &=a_k(\alpha_{n,\,k}+\beta_{n,\,k}u),\\
 b_{k+1}
 &\le a_k\left(\beta_{n,\,k+1}
       +\frac{\alpha_{n,\,k+1}}{\lambda^{\mathrm C}_{n-1,\,k}u}\right).
\end{align*}
Therefore, by Lemma~\ref{lem:Lambda-quadratic},
\[
 b_k^2-\lambda^{\mathrm C}_{n,\,k}b_{k-1}b_{k+1}
 \ge a_k^2Q_{n,\,k}(u)\ge0.
\]

For $k=2$, we have $a_0=0$, and hence
\[
 b_1=\alpha_{n,\,1}a_1=\alpha_{n,\,1}a_2u.
\]
The induction hypothesis~\eqref{eq:A-Lambda-n-1} for $k=2$ implies the bound of $b_3$ as given above, i.e.,
\[
 b_3\le a_2\left(\beta_{n,\,3}
       +\frac{\alpha_{n,\,3}}{\lambda^{\mathrm C}_{n-1,\,2}u}\right).
\]
Together with the recurrence  $b_2=a_2(\alpha_{n,\,2}+\beta_{n,\,2}u)$, we find that
\[
 b_2^2-\lambda^{\mathrm C}_{n,\,2}b_1b_3
 \ge a_2^2Q_{n,\,2}(u)\ge0.
\]

For $k=n-1$, we have $a_n=0$, and so
\[
 b_n=\beta_{n,\,n}a_{n-1}.
\]
The induction hypothesis~\eqref{eq:A-Lambda-n-1} for $k=n-2$ implies the bound of $b_{n-2}$ as given above, i.e.,
\[
 b_{n-2}\le a_{n-1}u
 \left(\alpha_{n,\,n-2}
       +\frac{\beta_{n,\,n-2}}{\lambda^{\mathrm C}_{n-1,\,n-2}}u\right).
\]
Using $b_{n-1}=a_{n-1}(\alpha_{n,\,n-1}+\beta_{n,\,n-1}u)$, we obtain
\[
 b_{n-1}^2-\lambda^{\mathrm C}_{n,\,n-1}b_{n-2}b_n
 \ge a_{n-1}^2Q_{n,\,n-1}(u)\ge0.
\]
Thus~\eqref{eq:A-Lambda-LC} holds for any $n\ge3$ and
$2\le k\le n-1$.
\qed

In summary, we may claim this completes the proof of Theorem~\ref{thm:main-row-LC} for the cycle case $r=5$.

\subsection{The subset cases for $3\leq r \leq 5$}
\label{subsec:subset-log-concavity}

We now turn to the proof of the row log-concavity for
the $r$th-order Stirling subset
triangles  for $3\leq r \leq 5$. The proof relies
on the recurrence relation~\eqref{rec.subset} for higher-order Stirling subset numbers, namely, for $n \ge1$ and $0 \le k \le n$,
\begin{align}
\stirlingsubset{n}{k}^{\!(r)}
=
k\stirlingsubset{n-1}{k}^{(r)}
+
\binom{n+(r-1)k-1}{r-1}
\stirlingsubset{n-1}{k-1}^{(r)}.
\label{eq:subset-rec-log}
\end{align}

\noindent 
{\it Proof of Theorem~\ref{thm:main-row-LC} for the subset case for $3\leq r \leq 5$.}  For $3 \leq r \leq 5$ and $n\ge 3$, set
\begin{align*}
a_k=\stirlingsubset{n-1}{k}^{(r)},
\qquad
b_k=\stirlingsubset{n}{k}^{\!(r)}.
\end{align*}
Then
\begin{align*}
b_k=c_ka_k+d_ka_{k-1},
\end{align*}
where, for $1\le k\le n$,
\begin{align*}
c_k=k,
\qquad
d_k=\binom{n+(r-1)k-1}{r-1}.
\end{align*}
We set $c_k$ and $d_k$ to zero outside this range.

It suffices to verify the hypotheses of either Lemma~\ref{lem:Sagan-log-concavity} or  Lemma~\ref{lem:quadratic-log-concavity}   for $2\le k\le n-1$. Assuming that
$k$ is in this range, we have
\begin{equation}
 \Delta_k(c)=1.
 \label{eq:delta.c.subset}
\end{equation}
Moreover, the sequence $d$ differs from its cycle counterpart only
by the positive constant factor $1/(r-1)!$. Thus inequality~\eqref{eq.delta.d.cycle}  for the cycle case 
implies that for $2\le k\le n-1$,
\begin{equation}
 \Delta_k(d)\ge0.
 \label{eq:delta.d.subset}
\end{equation}

For $r=3,4$, we proceed by induction on $n$. Let $n\ge3$, and assume 
that the $(n-1)$st row is log-concave.
For $r=3$ and $2\le k\le n-1$, we have
\begin{align*}
\Gamma_k(c,d)=2(2k+2n-3)>0.
\end{align*}
For $r=4$ and $2\le k\le n-1$, we find that
\begin{align*}
\Gamma_k(c,d)
=
9k(n-2)+3(n-2)^2+8>0.
\end{align*}
Moreover, we know that
$\Delta_k(c) \ge 0$ and $\Delta_k(d)\ge 0$, as shown in~\eqref{eq:delta.c.subset} and~\eqref{eq:delta.d.subset}.
Hence Lemma~\ref{lem:Sagan-log-concavity} says that the $n$th rows of both the third- and the fourth-order Stirling subset triangles are log-concave.

For $r=5$, when $n=3$ and $k=2$, we have
\begin{align*}
\Gamma_2(c,d)=-206<0.
\end{align*}
Thus Sagan's criterion no longer applies. 
Instead, we use Dey's criterion in Lemma~\ref{lem:quadratic-log-concavity} to 
deduce the row log-concavity.
We proceed by induction on $n$ as before. Let $n \ge 3$ and assume that the $(n-1)$st row is log-concave. To apply
Lemma~\ref{lem:quadratic-log-concavity}, by~\eqref{eq:delta.c.subset} and
\eqref{eq:delta.d.subset}, it remains to verify that for $2\le k\le n-1$,
\eqref{eq:quadratic-discriminant} holds when $\Gamma_k(c,d)<0$. Suppose that $\Gamma_k(c,d)<0$. A direct calculation shows that
\[
4\Delta_k(c)\Delta_k(d)-\Gamma_k(c,d)^2
=P_{n,\,k}\left(Q_{n,\,k}-\frac{n-1}{2}\Gamma_k(c,d)\right),
\]
where
\begin{align*}
 P_{n,\,k}=2\left(n+4k-\frac52\right)^2+\frac92>0,
\end{align*}
and
\begin{align*}
Q_{n,\,k}=&64k^3n-192k^3+32k^2n^2-160k^2n+272k^2\\
&\qquad+4kn^3-24kn^2-10kn
+158k+n^3-28n^2+136n-253.
\end{align*}
Set $a=k-2\ge0$ and $b=n-3\ge0$. Then $Q_{n,\,k}$ becomes
\begin{align*}
&64a^3b+32a^2b^2+416a^2b+80a^2+4ab^3+140ab^2 +850ab+340a \\
&\qquad+ 9b^3+133 b^2+543b+290>0.
\end{align*}
Since $n\ge3$ and $\Gamma_k(c,d)<0$, it follows that
\[
\Gamma_k(c,d)^2<4\Delta_k(c)\Delta_k(d).
\]
Using Lemma~\ref{lem:quadratic-log-concavity}, we complete the proof by 
induction.
\qed

\subsection{The subset case for $r=6$}
\label{sub:S_6}

Unlike the row log-concavity behavior of the \(r\)th-order Stirling subset triangles for \(1\le r\le5\), the case \(r=6\) is more intricate. First, the
row log-concavity holds for $n\ge 23$. Second, we
need a stronger version of the log-concavity inequality
that admits a justification by induction.

Write
$S_{n,\,k}=\stirlingsubset{n}{k}^{\!(6)}$. For $n\ge1$ and
$1\le k\le n$, define
\begin{equation}
\label{eq:S6-normalization}
 \widetilde{S}_{n,\,k}
 =\frac{120^k k!(k-1)!(n-k)!}{(n+5k)!}S_{n,\,k},
\end{equation}
and set $\widetilde S_{n,\,k}=0$ for $k<1$ or $k>n$. By
\eqref{eq:subset-rec-log}, we have for $n\ge2$ and
$1 \le k \le n$,
\begin{equation}
\label{eq:S6-normalized-recurrence}
\widetilde{S}_{n,\,k}=\alpha_{n,\,k}\widetilde{S}_{n-1,\,k}+\beta_{n,\,k}\widetilde{S}_{n-1,\,k-1},
\end{equation}
where
\begin{equation}
\label{eq:S6-recurrence-coefficients}
 \alpha_{n,\,k}=\frac{k(n-k)}{n+5k},
 \qquad
 \beta_{n,\,k}=\frac{k(k-1)}{n+5k}.
\end{equation}

Put $q=n-k$, $N=n+5k$, and define
\begin{equation*}
 \rho^{\mathrm S}_{n,\,k}
 =\frac{(k-1)q(N-5)!(N+5)!}{(k+1)(q+1)(N!)^2}.
\end{equation*}
By~\eqref{eq:S6-normalization}, the log-concavity   for $S_{n,\,k}$ is
equivalent to
\begin{equation}
\label{eq:S6-required-inequality}
 \widetilde{S}_{n,\,k}^2 \ge \rho^{\mathrm S}_{n,\,k}\widetilde{S}_{n,\,k-1}\widetilde{S}_{n,\,k+1}.
\end{equation}

We shall establish an auxiliary inequality that implies~\eqref{eq:S6-required-inequality} for all sufficiently large $n$.

\begin{theorem}
\label{thm.stronger.s}
Define
\begin{equation}
\label{eq:S6-lambda}
 \lambda^{\mathrm S}_{n,\,k}
 =
 1+\frac{52n(n-2)}{(n+5k)^3}.
\end{equation}
Then, for $n\ge3$ and $2\le k\le n-1$, we have
\begin{equation}
\label{eq:A-Lambda-LS}
 \widetilde{S}_{n,\,k}^2
 \ge
 \lambda^{\mathrm S}_{n,\,k}
 \widetilde{S}_{n,\,k-1}\widetilde{S}_{n,\,k+1}.
\end{equation}
\end{theorem}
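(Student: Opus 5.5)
We will follow the template of the proof of Theorem~\ref{thm.stronger.c}: induction on $n$, driven by the normalized recurrence~\eqref{eq:S6-normalized-recurrence} together with an a priori upper bound on the ratio $\widetilde S_{n,\,k-1}/\widetilde S_{n,\,k}$. The base case $n=3$ (only $k=2$) is a direct computation from the values $S_{3,\,1},S_{3,\,2},S_{3,\,3}$. For the induction step we set $a_k=\widetilde S_{n-1,\,k}$, $b_k=\widetilde S_{n,\,k}$ and $u=a_{k-1}/a_k$. The hypothesis~\eqref{eq:A-Lambda-LS} for $n-1$ gives $a_{k-2}/a_{k-1}\le u/\lambda^{\mathrm S}_{n-1,\,k-1}$ and $a_{k+1}/a_k\le 1/(\lambda^{\mathrm S}_{n-1,\,k}u)$. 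From the recurrence we then obtain the bound
\[
b_k^2-\lambda^{\mathrm S}_{n,\,k}b_{k-1}b_{k+1}\ge a_k^2\,Q^{\mathrm S}_{n,\,k}(u),
\]
where $Q^{\mathrm S}_{n,\,k}$ is the quadratic obtained by replacing $\alpha,\beta,\lambda^{\mathrm C}$ with their $S^{(6)}$ counterparts in~\eqref{eq:Qnk-interior}. For the boundary values $k=2$ (where $a_0=0$) and $k=n-1$ (where $a_n=0$) we use the analogues of~\eqref{eq:Qnk-lower-boundary} and~\eqref{eq:Qnk-upper-boundary}, exactly as before.

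The ratio bound will come from an analogue of Lemma~\ref{lem:A-ratio-bound}. The egf for $6$-associated Stirling subset numbers gives
\[
S_{n,\,k}=\frac{(n+5k)!}{k!}\,[z^{n+5k}]\Big(e^z-\sum_{j<6}\frac{z^j}{j!}\Big)^k,
\]
so $\widetilde S_{n,\,k}$ is, up to the factor $(k-1)!(n-k)!\,120^k$, a weighted sum over compositions $e_1+\cdots+e_k=n$ with weights $\prod_j 120/(e_j+5)!$. The merge/split double counting of~\eqref{eq:merge-split-identity} then produces an identity involving
\[
\mu(s,t)=\frac{(s+4)!\,(t+4)!}{120\,(s+t+4)!}\cdot(\text{binomial factors}).
\]
We need an upper bound of the form $U^{\mathrm S}_{n,\,k}$, rational in $n,k$. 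The cleanest candidate is to bound $\sum_i\mu(e_i,e_{i+1})$ by its value at the "balanced" extremes, i.e.\ a linear-plus-correction expression in $k-1$ and $q=n-k$. If that proves awkward, a cruder fallback is to use the bound implied by plain log-concavity of row $n-1$ together with $b$-positivity, refined only near the boundary.

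The remaining step is to show $Q^{\mathrm S}_{n,\,k}(u)\ge 0$ on $0\le u\le U^{\mathrm S}_{n-1,\,k}$. We check the same three conditions as in~\eqref{eq:C5-three-quadratic-tests}: positive leading coefficient, negative derivative at the right endpoint, and positive value there. After clearing positive denominators, each condition becomes a polynomial inequality in $(k,q)$, which we certify by substitutions of the type $k=a+2$, $q=b+1$, $q=ck+b$ over suitable subregions, so that all coefficients come out nonnegative. Small cases that resist this treatment are verified numerically.

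The main obstacle is the choice of $\lambda^{\mathrm S}$. It is small, $1+O(1/n)$, so the slack in $Q^{\mathrm S}$ is thin, especially near $k\approx n$ where $\alpha_{n,\,k}\to 0$ and the inequality is nearly tight. The positivity certificate for $Q^{\mathrm S}(U^{\mathrm S})$ will likely need a fine region decomposition. If it fails for small $n$, we will verify~\eqref{eq:A-Lambda-LS} directly for $n$ up to some $n_0$ and start the induction at $n_0$. Finally, Theorem~\ref{thm:main-row-LC}(2) will follow from a comparison lemma showing $\lambda^{\mathrm S}_{n,\,k}\ge\rho^{\mathrm S}_{n,\,k}$ for $n\ge 23$, proved like Lemma~\ref{lem:Lambda-dominates-R}.
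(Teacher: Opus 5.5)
Your induction skeleton matches the paper's. You use the same $u=a_{k-1}/a_k$, the same consequences of the hypothesis for row $n-1$, and the same reduction $b_k^2-\lambda^{\mathrm S}_{n,\,k}b_{k-1}b_{k+1}\ge a_k^2\,Q^{\mathrm S}_{n,\,k}(u)$. Your separate boundary quadratics coincide with the interior one, because $\beta_{n,\,1}=0$ and $\alpha_{n,\,n}=0$ in this normalization.

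The gap is in how you certify $Q^{\mathrm S}_{n,\,k}\ge0$. The three tests borrowed from Lemma~\ref{lem:Lambda-quadratic} (positive leading coefficient, $Q'(U)<0$, $Q(U)>0$) can succeed only if the vertex of the parabola lies to the right of $U$. Here it does not. Take $k=n-1$. Then $\widetilde S_{n-1,\,n-1}=(n-2)!/6^{n-1}$, and every partition counted by $S_{n-1,\,n-2}$ has exactly one block of size $7$, so
\[
u=\frac{\widetilde S_{n-1,\,n-2}}{\widetilde S_{n-1,\,n-1}}=\frac67
\]
for every $n$. Hence any admissible bound satisfies $U\ge 6/7$. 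Your merge/split argument, with the correct factor $\mu(s,t)=(s+5)!\,(t+5)!/\bigl(120\,(s+t+5)!\bigr)\le\mu(1,1)=6/7$, gives exactly $U=6/7$.

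By my computation, however, the vertex of $Q^{\mathrm S}_{n,\,n-1}$ is at about $0.826$ for $n=11$ and about $0.804$ for $n=101$, tending to roughly $0.80$. So $(Q^{\mathrm S}_{n,\,n-1})'(U)>0$ for all large $n$. This is an infinite family, not a small case, so neither a finer region decomposition nor starting the induction at some $n_0$ helps. On $[0,U]$ the minimum is attained at the interior vertex, where $Q^{\mathrm S}$ is positive only by a thin margin. Proving that positivity is the same as proving the discriminant is negative.

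That is the missing idea, and it is exactly the paper's route:
\begin{enumerate}
\item Write $Q^{\mathrm S}_{n,\,k}(u)$ as a positive factor times $\mathcal L u^2+\mathcal M u+\mathcal C$.
\item Show $\mathcal M^2-4\mathcal L\mathcal C=-4(k^2-1)(n-1)^2H_{n,\,k}R_{k,\,n-k}$, where $H_{n,\,k}=(n+5k)^3+52n(n-2)$ and $R_{k,\,q}>0$ is certified after $k=a+2$, $q=b+1$ for all real $k\ge2$, $q\ge1$.
\item Obtain $\mathcal L>0$ by continuity in $k$ starting from $k=2$, where $\beta_{n,\,1}=0$ makes the subtracted $u^2$-term vanish.
\end{enumerate}
Then $Q^{\mathrm S}_{n,\,k}(u)>0$ for every real $u$. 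No ratio bound is needed at all, and your merge/split lemma can be discarded.

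Separately, your closing claim that $\lambda^{\mathrm S}_{n,\,k}\ge\rho^{\mathrm S}_{n,\,k}$ for $n\ge23$ is false. At $n=30$, $k=14$ one has $\rho^{\mathrm S}\approx1.0462>\lambda^{\mathrm S}\approx1.0437$. The paper proves the comparison only for $n\ge80$ and checks $23\le n\le79$ numerically.
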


By the following relation, we see that the above inequality implies the log-concavity for $n \ge 80$.

\begin{lemma}
\label{lem:S6-factor}
For $n\ge80$ and $2\le k\le n-1$,
\[
 \lambda^{\mathrm S}_{n,\,k}>\rho^{\mathrm S}_{n,\,k}.
\]
\end{lemma}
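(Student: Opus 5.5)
We compare $\rho^{\mathrm S}_{n,\,k}$ with $1$ and with $\lambda^{\mathrm S}_{n,\,k}$ directly. Put $q=n-k\ge1$ and $N=n+5k$. Then
\[
\rho^{\mathrm S}_{n,\,k}=\frac{(k-1)q}{(k+1)(q+1)}\prod_{j=1}^{5}\frac{N+j}{N-5+j}.
\]
The first factor equals
\[
\Bigl(1-\frac{2}{k+1}\Bigr)\Bigl(1-\frac{1}{q+1}\Bigr),
\]
which is strictly less than $1$. The product is a little larger than $1$, roughly $1+25/N$. So the plan is to show that the deficit coming from the first factor beats the excess from the product, once the extra term $52n(n-2)/N^3$ in $\lambda^{\mathrm S}_{n,\,k}$ is taken into account.

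Concretely, we clear denominators. Write
\[
\lambda^{\mathrm S}_{n,\,k}-\rho^{\mathrm S}_{n,\,k}
=\frac{P(k,q)}{N^3(k+1)(q+1)(N-1)(N-2)(N-3)(N-4)},
\]
where the denominator is positive for $k\ge2$, $q\ge1$. The numerator $P$ is a polynomial in $k$ and $q$ after substituting $n=k+q$. Heuristically its sign is governed by
\[
\frac{2}{k+1}+\frac{1}{q+1}+\frac{52n^2}{N^3}-\frac{25}{N}.
\]
Since $N\le 6n$, the term $25/N$ is dominated by $2/(k+1)$ unless $k$ is large. When $k$ is large, $q$ may also be large. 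In the regime $k\approx n$ ($q$ small), the term $1/(q+1)$ is of order $1$ and wins easily. The dangerous regime is when both $k$ and $q$ are comparable to $n$. There, $2/k+1/q$ versus $25/N$ is a genuine competition, and the $52n^2/N^3$ term together with $n\ge80$ must settle it. I would first check this balance numerically along rays $k=\theta n$ to locate the worst $\theta$ and confirm that the threshold $80$ arises there.

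To make this rigorous, I would follow the same method the paper used in Lemma~\ref{lem:Lambda-dominates-R}. Split the region $\{k\ge2,\ q\ge1,\ k+q\ge80\}$ into a few sectors. For example, take $q\le k/c_1$, then $k/c_1\le q\le c_2k$, then $q\ge c_2k$, with ratios chosen from the numerical exploration. In each sector, substitute linear changes of variables such as $k=2+a$, $q=k/c_1+b$, or $k+q=80+s$, with $a,b,s\ge0$, so that $P$ becomes a polynomial with nonnegative coefficients and a positive constant term.

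The main obstacle is the middle sector, where $k$ and $q$ are proportional. In that sector the leading homogeneous part of $P$ may be negative or vanish for some ratio, and positivity then relies on the constraint $n\ge80$. There I would substitute $n=80+s$ explicitly, writing $k=\theta$-parametrized as $k=c\,q+a$ within narrow subsectors, and refine the sector boundaries until all coefficients become nonnegative. Any finitely many leftover small cases could be checked directly by computer.
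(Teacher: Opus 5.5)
Your plan is sound in outline, but it takes a genuinely different route from the paper's.

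The paper never handles the exact numerator. It first replaces the factorial ratio by the rational bound $(N-5)!(N+5)!/(N!)^2<(N+13)/(N-12)$, checked by an explicit identity. This reduces the problem to the degree-$6$ polynomial $P^{\mathrm S}_{n,k}$, which has effective degree $5$ after the top-degree cancellation, instead of your degree-$9$ numerator of effective degree $8$. Then, instead of splitting into sectors, it sets $n=80+x$ and exhibits one global certificate $(38x-80k+3055)^2J_{x,k}+kF_x+G_x$.

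That square vanishes on the line $80k=38n+15$, i.e. $k\approx0.475\,n$. This is the critical ray your heuristic isolates: along $k=\theta n$ your expression is $f(\theta)/n$ with $f(\theta)=2/\theta+1/(1-\theta)+52(1+5\theta)^{-3}-25(1+5\theta)^{-1}$, whose minimum is about $0.06$ near $\theta\approx0.48$.

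The trade-off is as follows. The paper's bound agrees with the true ratio only through $1+25/N+300/N^2$, so it spends part of an already thin margin. At $n=80$, $k=38$, the gap $\lambda^{\mathrm S}_{n,k}-\rho^{\mathrm S}_{n,k}\approx1.0\cdot10^{-4}$ shrinks to about $4\cdot10^{-5}$ after the bound. In return the paper gets a small polynomial and a single certificate. Your exact numerator keeps the full margin and could in principle give a somewhat smaller threshold. However, near $\theta\approx0.48$ the positive terms exceed $25/N$ by less than $1\%$. So expect fairly narrow middle subsectors, each parametrized by two independent nonnegative variables with $k+q\ge80$ built in.

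One correction to how you describe the obstacle. The degree-$9$ part of $P$ cancels identically. The degree-$8$ part is $kqN^7\bigl(2/k+1/q-25/N+52n^2/N^3\bigr)$, which is strictly positive on every ray of the closed quadrant. This is the statement $f>0$, and it has to be verified, not just explored numerically. If this form were negative for some ratio, the inequality would fail for all large $n$ along that ray, and the hypothesis $n\ge80$ could not rescue it. That hypothesis only absorbs the lower-order terms. It is genuinely needed there: the inequality fails, for example, at $n=60$, $k=29$.
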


\proof
Put $N=n+5k$. For every integer $N>12$, we have
\begin{equation*}
 \frac{(N-5)!(N+5)!}{(N!)^2}<\frac{N+13}{N-12},
\end{equation*}
because
\[
 \frac{N+13}{N-12}
 -\frac{(N-5)!(N+5)!}{(N!)^2}
 =\frac{120(2N+1)(5N^2+5N+12)}
 {(N-12)\prod_{j=0}^{4}(N-j)}>0.
\]
Consequently, to prove the lemma, it suffices to show that
\[
 \left(1+\frac{52n(n-2)}{N^3}\right)\frac{(k+1)(n-k+1)}{(k-1)(n-k)}
 -\frac{N+13}{N-12}>0.
\]
Multiplying both sides by the positive factor $N^3(k-1)(n-k)(N-12)$, we see that it suffices to prove that 
\begin{align*}
 P^{\mathrm S}_{n,\,k}
 =&[N^3+52n(n-2)](k+1)(n-k+1)(N-12)\\
 &\qquad -N^3(k-1)(n-k)(N+13)>0.
\end{align*}
Write $n=80+x$. We have
\begin{equation*}
 1638400\,P^{\mathrm S}_{n,\,k}
 =(38x-80k+3055)^2J_{x,\,k}+kF_x+G_x,
\end{equation*}
where $J_{x,\,k}$ is a polynomial in $x$ and $k$, while $F_x$ and $G_x$ are polynomials in $x$. All three polynomials have nonnegative coefficients and a positive constant term.
It is now clear that $P^{\mathrm S}_{n,\,k}>0$ for    $n\ge 80$ and $k\ge2$.
This completes the proof.
\qed

The following lemma is
needed in the proof of \eqref{eq:A-Lambda-LS}.

\begin{lemma}
\label{lem:S6-quadratic}
Let $\alpha_{n,\,k}$ and $\beta_{n,\,k}$ be the coefficients defined in~\eqref{eq:S6-recurrence-coefficients}, and let $\lambda^{\mathrm S}_{n,\,k}$ be the function defined in~\eqref{eq:S6-lambda}. For $n\ge3$ and $2\le k\le n-1$, define
\begin{align}
 Q^{\mathrm S}_{n,\,k}(u)
 =&(\alpha_{n,\,k}+\beta_{n,\,k}u)^2\notag\\
 &-\lambda^{\mathrm S}_{n,\,k}
 \left(\alpha_{n,\,k-1}
       +\frac{\beta_{n,\,k-1}}{\lambda^{\mathrm S}_{n-1,\,k-1}}u\right)
 \left(\beta_{n,\,k+1}u
       +\frac{\alpha_{n,\,k+1}}{\lambda^{\mathrm S}_{n-1,\,k}}\right).
 \label{eq:S6-quadratic}
\end{align}
Then for $u\in\mathbb R$,
\[
 Q^{\mathrm S}_{n,\,k}(u)>0.
\]
\end{lemma}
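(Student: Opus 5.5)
Since $Q^{\mathrm S}_{n,\,k}(u)$ is a quadratic in $u$, the claim that it is positive for every real $u$ amounts to two polynomial inequalities in $n$ and $k$: the leading coefficient is positive and the discriminant is negative. Write
\[
Q^{\mathrm S}_{n,\,k}(u)=A u^2+Bu+C,
\]
with
\[
A=\beta_{n,\,k}^2-\frac{\lambda^{\mathrm S}_{n,\,k}}{\lambda^{\mathrm S}_{n-1,\,k-1}}\beta_{n,\,k-1}\beta_{n,\,k+1},\qquad
C=\alpha_{n,\,k}^2-\frac{\lambda^{\mathrm S}_{n,\,k}}{\lambda^{\mathrm S}_{n-1,\,k}}\alpha_{n,\,k-1}\alpha_{n,\,k+1},
\]
\[
B=2\alpha_{n,\,k}\beta_{n,\,k}-\lambda^{\mathrm S}_{n,\,k}\Bigl(\alpha_{n,\,k-1}\beta_{n,\,k+1}+\frac{\beta_{n,\,k-1}\alpha_{n,\,k+1}}{\lambda^{\mathrm S}_{n-1,\,k-1}\lambda^{\mathrm S}_{n-1,\,k}}\Bigr).
\]
I will prove $A>0$ and $4AC-B^2>0$ for $n\ge3$ and $2\le k\le n-1$. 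The conclusion then follows, and it also gives $C>0$.

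\textbf{Boundary cases.} For $k=2$ the term $\alpha_{n,\,1}$ is nonzero, but $\beta_{n,\,1}=0$. For $k=n-1$, $\alpha_{n,\,n}=0$. In both cases the inner expression collapses, and I will first check whether the general formula still makes sense there. The factor $\lambda^{\mathrm S}_{n-1,\,k}$ at $k=n-1$ lies outside the range of the theorem but is still defined by \eqref{eq:S6-lambda}. Since $\alpha_{n,\,n}=0$, the term containing it vanishes, and it does no harm. At $k=2$ the coefficient $A$ reduces to $\beta_{n,\,2}^2>0$.

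\textbf{General case.} Put $N=n+5k$. Clearing denominators means multiplying by
\[
N^2(N-5)(N+5)\,\bigl((N-6)^3+52(n-1)(n-3)\bigr)\bigl((N-1)^3+52(n-1)(n-3)\bigr)\cdot N^3 ,
\]
together with similar positive factors coming from each $\lambda^{\mathrm S}$. This turns $A$ and $4AC-B^2$ into polynomials in $n$ and $k$. Note that $\lambda^{\mathrm S}_{n-1,\,k-1}$ involves $(n-1)+5(k-1)=N-6$ and $\lambda^{\mathrm S}_{n-1,\,k}$ involves $N-1$.

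The leading parts are easy to read off. We have $\beta_{n,\,k}^2-\beta_{n,\,k-1}\beta_{n,\,k+1}\approx$ a positive multiple of $k^2/N^2$, and the $\lambda$ ratios equal $1+O(n^2/N^4)$, so positivity of $A$ should be robust. The real content is in the discriminant. Without the $\lambda$ corrections, $4AC-B^2$ is the Dey-type quantity $4\Delta(c)\Delta(d)-\Gamma^2$. The $\lambda$ terms are perturbations of relative size $52n(n-2)/N^3$, and they are tuned exactly so that this quantity stays positive.

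To verify the polynomial inequalities I will proceed as in Lemma~\ref{lem:Lambda-dominates-R}. Substitute $k=a+2$ and $n=k+1+b$ with $a,b\ge0$, expand, and look for nonnegative coefficients together with a positive constant term. Where some coefficients come out negative, I will split into the regions $q\le k$ and $q\ge k$ (with $q=n-k$), using substitutions such as $k=q+a$ or $q=k+t$, and if needed a sum-of-squares decomposition like the one in Lemma~\ref{lem:S6-factor}.

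\textbf{Main obstacle.} I expect the discriminant $4AC-B^2$ to be the hard step. It is a polynomial of high degree (roughly degree 20 or more in $n,k$), and its positivity near the diagonal regime, where $k$ is comparable to $n$ and the $\lambda$ correction is of order $1/n$, is delicate. The first thing I will try is to factor out obvious positive factors such as $k$, $k-1$, $q$, and powers of $N$. I will then try the region-splitting substitutions. If plain coefficient positivity fails, I will complete a square in the dominant direction, as was done with $(38x-80k+3055)^2$ in Lemma~\ref{lem:S6-factor}.
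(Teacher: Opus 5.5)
Your proposal is correct and follows the paper's route: show the quadratic has positive leading coefficient and negative discriminant, clear the positive denominators built from $N$, $N^2-25$, $H_{n-1,k}$ and $H_{n-1,k-1}$, and certify the resulting polynomial inequality with the substitution $k=a+2$, $q=b+1$. The paper factors the discriminant as $-4(k^2-1)(n-1)^2H_{n,k}R_{k,n-k}$ and finds $R=1772706816\,a^{11}(a^2-b^2)^2+\mathcal T(a,b)$, which is the kind of square completion you anticipated. The one difference is that the paper needs no separate certificate for the leading coefficient: a negative discriminant on the whole real domain forces $\mathcal L_{n,k}\neq0$, and continuity from $\mathcal L_{q+2,2}=D^{\mathrm S}_{q+2,2}>0$ (the subtracted term carries the factor $k-2$) gives $\mathcal L_{n,k}>0$; your direct proof of $A>0$ also works but costs one more polynomial check.
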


\proof
 Unlike in the proof of Lemma~\ref{lem:Lambda-quadratic},
we do not need to bound $u$.
For $n\ge3$ and $2\le k\le n-1$, put $q=n-k\ge1$ and $N=n+5k=6k+q$. For notational convenience, write
\[
 H_{n,\,k}=(n+5k)^3+52n(n-2),
\]
and
\begin{equation*}
 D^{\mathrm S}_{n,\,k}
 =k^2N(N^2-25)H_{n-1,\,k}H_{n-1,\,k-1}.
\end{equation*}
Both polynomials are positive for   $n\ge3$ and $2\le k\le n-1$. In this notation, we have 
\begin{align*}
    \lambda^{\mathrm S}_{n,\,k}=H_{n,\,k}/N^3, \quad \alpha_{n,\,k}=\frac{kq}{N}, \quad \beta_{n,\,k}=\frac{k(k-1)}{N}.
\end{align*}
Now \eqref{eq:S6-quadratic} takes the form
\begin{equation}
\label{eq:S6-quadratic-coefficients}
 Q^{\mathrm S}_{n,\,k}(u)
 =\frac{k^2}{N^2D^{\mathrm S}_{n,\,k}}
  \bigl(\mathcal L_{n,\,k}u^2
       +\mathcal M_{n,\,k}u+\mathcal C_{n,\,k}\bigr).
\end{equation}
The three coefficients are
\begin{align*}
 \mathcal L_{n,\,k}
 =&(k-1)^2D^{\mathrm S}_{n,\,k}-H_{n,\,k}(k^2-1)k(k-2)(N-6)^3H_{n-1,\,k},\\
 \mathcal M_{n,\,k}
 =&2q(k-1)D^{\mathrm S}_{n,\,k} -H_{n,\,k}(k^2-1)\\
 & \qquad \times 
 \bigl[k(q+1)H_{n-1,\,k}H_{n-1,\,k-1}+(k-2)(q-1)(N-1)^3(N-6)^3\bigr],\\
 \mathcal C_{n,\,k}
 =&q^2D^{\mathrm S}_{n,\,k}-H_{n,\,k}(k^2-1)(q^2-1)(N-1)^3H_{n-1,\,k-1}.
\end{align*}

Consider the discriminant of $\mathcal L_{n,\,k}u^2 +\mathcal M_{n,\,k}u+\mathcal C_{n,\,k}$ in~\eqref{eq:S6-quadratic-coefficients}, that is,
\[ \mathcal M_{n,\,k}^{2}-4\mathcal L_{n,\,k}\mathcal C_{n,\,k}
=-4(k^2-1)(n-1)^2H_{n,\,k}\,R_{k,\,n-k},
\]
where 
\begin{equation*}
 R_{k,\,n-k}
 =
 \frac{4\mathcal L_{n,\,k}\mathcal C_{n,\,k}-\mathcal M_{n,\,k}^{2}}
      {4(k^2-1)(n-1)^2H_{n,\,k}}.
\end{equation*}
Since
\[
4(k^2-1)(n-1)^2H_{n,\,k}>0,
\]
it remains to prove that
$R_{k,\,q}>0$ for $k\ge2$ and $q\ge1$.
With the substitutions $k=a+2$ and $q=b+1$, we see that
\begin{equation*}
 R_{k,\,q}
 =1772706816\,a^{11}(a^2-b^2)^2+\mathcal T(a,b),
\end{equation*}
where $\mathcal T$ is a polynomial in $a$ and $b$ with nonnegative coefficients and a positive constant term.  Hence
\begin{equation}
\label{eq:S6-discriminant-positive}
 \mathcal M_{n,\,k}^{2}-4\mathcal L_{n,\,k}\mathcal C_{n,\,k} <0
\end{equation}
for $n\ge3$ and $ 2\le k\le n-1$. The argument is valid for all real
$k\ge2$ and $q\ge1$. Hence this discriminant is negative
throughout this real domain.

To determine the sign of the leading coefficient of $\mathcal L_{n,\,k}u^2+\mathcal M_{n,\,k}u+\mathcal C_{n,\,k}$, we note that
the inequality~\eqref{eq:S6-discriminant-positive} implies that
$\mathcal L_{k+q,\,k}$ is nonzero for every $k\ge2$ and $q\ge1$. Given $q\ge1$,
the function $\mathcal L_{k+q,\,k}$ is continuous on
$k \in [2,\infty)$, and
\[
 \mathcal L_{q+2,\,2}=D^{\mathrm S}_{q+2,\,2}>0.
\]
It follows that $\mathcal L_{k+q,\,k}>0$, or equivalently,  $\mathcal L_{n,\,k}>0$ for $n\ge3$ and $ 2\le k\le n-1$.
Since $Q^{\mathrm S}_{n,\,k}(u)$ equals a positive factor  multiplied by the quadratic polynomial  which is strictly positive, the lemma follows.
\qed

We now give a proof of \eqref{eq:A-Lambda-LS}.

\noindent{\it Proof of Theorem~\ref{thm.stronger.s}.}
We proceed by induction on $n$. It is readily checked that the theorem holds for $n=3$.
Suppose that $n\ge4$ and that~\eqref{eq:A-Lambda-LS} holds for the
$(n-1)$st row, that is, for $2\le k\le n-2$, we have
\begin{align*}
 \widetilde{S}_{n-1,\,k}^2
 \ge\lambda^{\mathrm S}_{n-1,\,k}
    \widetilde{S}_{n-1,\,k-1}\widetilde{S}_{n-1,\,k+1}.
\end{align*}
For $2\le k\le n-1$, write
\[
 a_j=\widetilde{S}_{n-1,\,j},\qquad b_j=\widetilde{S}_{n,\,j},
 \qquad u=\frac{a_{k-1}}{a_k}>0.
\]

By the induction hypothesis and the assumption that for $n \ge 1$, $\widetilde{S}_{n,\,j}=0$ for $j<1$ or $j>n$, we have
\begin{equation*}
 \frac{a_{k-2}}{a_{k-1}}\le\frac{u}{\lambda^{\mathrm S}_{n-1,\,k-1}},
 \qquad
 \frac{a_{k+1}}{a_k}\le\frac1{\lambda^{\mathrm S}_{n-1,\,k}u}.
\end{equation*}
At $k=2$, the first inequality follows from $a_0=0$; at $k=n-1$, the second follows from $a_n=0$.
All remaining cases follow from the induction hypothesis.
Using the recurrence relation~\eqref{eq:S6-normalized-recurrence}, we then obtain
\begin{align*}
 b_{k-1}
 &\le a_ku\left(\alpha_{n,\,k-1}
       +\frac{\beta_{n,\,k-1}}{\lambda^{\mathrm S}_{n-1,\,k-1}}u\right),\\
 b_k&=a_k(\alpha_{n,\,k}+\beta_{n,\,k}u),\\
 b_{k+1}
 &\le a_k\left(\beta_{n,\,k+1}
       +\frac{\alpha_{n,\,k+1}}{\lambda^{\mathrm S}_{n-1,\,k}u}\right).
\end{align*}
By Lemma~\ref{lem:S6-quadratic}, we deduce that
\[
 b_k^2-\lambda^{\mathrm S}_{n,\,k}b_{k-1}b_{k+1}
 \ge a_k^2Q^{\mathrm S}_{n,\,k}(u)>0.
\]
This proves
\eqref{eq:A-Lambda-LS} for  $n\ge3$ and $2\le k\le n-1$. \qed

To conclude this section, we note that all the 
cases of Theorem~\ref{thm:main-row-LC} have been proved, and the range
$n\geq 23$ in Theorem~\ref{thm:main-row-LC} is 
obtained from the condition $n\geq 80$ in Lemma~\ref{lem:S6-factor} along with
numerical verifications for $23 \leq n\le 79$.

\section{The non-real-rootedness for $r=3$}
\label{sec:S3-nonreal}

In this section, we confirm the non-real-rootedness 
of the row-generating polynomial $s_{3,\,n}(x)$ for the
third-order Stirling subset triangle~\cite[Conjecture~1.4(c) and Section~7.1]{Deb-Sokal}. 
It has been shown in~\cite[Section~7.1]{Deb-Sokal} that $s_{r,\,n}(x)$ is not real-rooted for
$r\geq 4$ and $n\geq 3$ and the row-generating polynomial $c_{r,\,n}(x)$ for the
 $r$th-order Stirling cycle triangle is not real-rooted for  $r\ge3$ and
$n\ge3$.

Our proof uses a necessary condition that follows from
Holtz and Tyaglov~\cite[Theorem~4.29]{Holtz-Tyaglov-2012}. An equivalent condition
appears in Nguyen and
Vishnyakova~\cite[Lemma~1.2]{Nguyen-Vishnyakova-2020}.
We include a proof for completeness.

\begin{lemma}
\label{lem:four-leading-coefficients}
Let $f(x)$ be a monic polynomial of degree $m\geq3$, with
\begin{align*}
f(x)
=
x^m+e_1x^{m-1}+e_2x^{m-2}+e_3x^{m-3}+\cdots+e_m.
\end{align*}
If all the zeros of $f(x)$ are nonpositive real numbers, then
\begin{align}
e_1^2e_2+3e_1e_3-4e_2^2\geq0.
\label{eq:four-leading-condition}
\end{align}
\end{lemma}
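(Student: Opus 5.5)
We plan to pass from the coefficients to the power sums of the roots and then apply one Cauchy--Schwarz inequality. Write $f(x)=\prod_{i=1}^m(x+t_i)$ with all $t_i\ge0$. Then $e_1,e_2,e_3$ are the elementary symmetric polynomials of $t_1,\ldots,t_m$. We introduce the power sums $p_j=\sum_i t_i^j$. Newton's identities give
\begin{align*}
e_1&=p_1,\\
2e_2&=p_1^2-p_2,\\
6e_3&=p_1^3-3p_1p_2+2p_3.
\end{align*}
Substituting these into $e_1^2e_2+3e_1e_3-4e_2^2$ turns the left-hand side of~\eqref{eq:four-leading-condition} into a polynomial in $p_1,p_2,p_3$.

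We now expand. The first term is $e_1^2e_2=\tfrac12(p_1^4-p_1^2p_2)$. The second is $3e_1e_3=\tfrac12(p_1^4-3p_1^2p_2+2p_1p_3)$. The third is $4e_2^2=(p_1^2-p_2)^2=p_1^4-2p_1^2p_2+p_2^2$. In the sum the $p_1^4$ terms cancel, and the $p_1^2p_2$ terms combine to $-\tfrac12p_1^2p_2-\tfrac32p_1^2p_2+2p_1^2p_2=0$. What remains is
\[
e_1^2e_2+3e_1e_3-4e_2^2=p_1p_3-p_2^2 .
\]

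It then suffices to show $p_1p_3\ge p_2^2$. This is Cauchy--Schwarz applied to the vectors $(t_i^{1/2})$ and $(t_i^{3/2})$, which are well defined because every $t_i\ge0$:
\[
\Bigl(\sum_i t_i^2\Bigr)^2=\Bigl(\sum_i t_i^{1/2}t_i^{3/2}\Bigr)^2\le\sum_i t_i\sum_i t_i^3 .
\]
This is the only point where the hypothesis that the zeros are nonpositive is used. Without it the $t_i^{1/2}$ are undefined, and $p_1p_3$ can in fact be smaller than $p_2^2$.

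The hypothesis $m\ge3$ only ensures that $e_3$ is a genuine coefficient of $f$; the identity itself holds for any $m$. The step we expect to need the most care is the algebraic expansion, specifically checking that the $p_1^4$ and $p_1^2p_2$ terms cancel exactly. If the expansion turned out messier than expected, the fallback is to prove the symmetric identity directly in the monomial basis.
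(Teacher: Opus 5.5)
Your proposal is correct and takes the same approach as the paper, which also uses Newton's identities to rewrite $e_1^2e_2+3e_1e_3-4e_2^2$ as $p_1p_3-p_2^2$. The only cosmetic difference is that the paper finishes with the explicit identity $p_1p_3-p_2^2=\sum_{i<j}\lambda_i\lambda_j(\lambda_i-\lambda_j)^2$. That identity is exactly the Lagrange-identity form of your Cauchy--Schwarz step with the vectors $(t_i^{1/2})$ and $(t_i^{3/2})$.
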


\proof 
Write
\begin{align*}
f(x)=\prod_{i=1}^{m}(x+\lambda_i),
\end{align*}
where $\lambda_i \ge 0$,
and put
\begin{align*}
p_j=\sum_{i=1}^{m}\lambda_i^j,
\qquad j=1,2,3.
\end{align*}
Applying   the Girard--Newton formula or Waring's formula~\cite{Stanley}, we obtain
\begin{align*}
p_1=e_1,\qquad
p_2=e_1^2-2e_2,\qquad
p_3=e_1^3-3e_1e_2+3e_3.
\end{align*}
Consequently,
\begin{align*}
e_1^2e_2+3e_1e_3-4e_2^2
&=p_1p_3-p_2^2\\[6pt]
&=\sum_{1\leq i<j\leq m}
  \bigl(\lambda_i\lambda_j^3
       +\lambda_j\lambda_i^3
    -2\lambda_i^2\lambda_j^2\bigr)\\[6pt]
&=\sum_{1\leq i<j\leq m}
  \lambda_i\lambda_j(\lambda_i-\lambda_j)^2
\geq0.
\end{align*}
This proves \eqref{eq:four-leading-condition}.
\qed

\begin{theorem}
\label{theorem:S3-nonreal}
For $n\ge4$, the polynomial $s_{3,\,n}(x)$ is not real-rooted.  
\end{theorem}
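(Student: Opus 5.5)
The plan is to apply Lemma~\ref{lem:four-leading-coefficients} to the monic polynomial $f(x)=s_{3,\,n}(x)/\stirlingsubset{n}{n}^{(3)}$ and show that the quantity in \eqref{eq:four-leading-condition} is strictly negative for every $n\ge4$.

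\emph{Why the lemma applies.} For $n\ge1$ the coefficients $\stirlingsubset{n}{k}^{(3)}$ are positive for $1\le k\le n$, so $s_{3,\,n}(x)$ has degree $n\ge4\ge3$ and no positive zeros. If it were real-rooted, all its zeros would therefore lie in $(-\infty,0]$. The lemma then forces $e_1^2e_2+3e_1e_3-4e_2^2\ge0$, where $e_j=\stirlingsubset{n}{n-j}^{(3)}/\stirlingsubset{n}{n}^{(3)}$. It therefore suffices to prove the reverse strict inequality.

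\emph{Computing $e_1,e_2,e_3$.} By definition $\stirlingsubset{n}{n-j}^{(3)}=\stirlingsubset{3n-2j}{n-j}_3$ counts partitions of $[3n-2j]$ into $n-j$ blocks, each of size at least $3$. For $j\le3$ the $j$ surplus elements are distributed over the blocks, all other blocks being triples.
\begin{itemize}
\item For $j=1$ the only block-size type is one $4$-block.
\item For $j=2$ the types are one $5$-block, or two $4$-blocks.
\item For $j=3$ the types are one $6$-block, a $5$-block together with a $4$-block, or three $4$-blocks.
\end{itemize}
Each type is counted by the multinomial formula $M!/\bigl(\prod_i |B_i|!\cdot\prod_s m_s!\bigr)$, where $m_s$ is the number of blocks of size $s$. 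Dividing by $\stirlingsubset{n}{n}^{(3)}=(3n)!/(6^n n!)$, every $e_j$ becomes an explicit rational function of $n$. Its numerator contains the falling factorial $n(n-1)\cdots$ coming from the block multiplicities, and its denominator is the falling factorial $(3n)(3n-1)\cdots(3n-2j+1)$.

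For instance,
\[
e_1=\frac{36\,n(n-1)}{4!\,(3n)(3n-1)},
\]
and $e_2$, $e_3$ are sums of two and three such terms respectively. As a cross-check, these values can be confirmed against the recurrence \eqref{rec.subset} for small $n$.

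\emph{The sign argument.} Substitute the formulas into $E(n)=e_1^2e_2+3e_1e_3-4e_2^2$. Then clear the common positive denominator, a product of powers of the linear factors $3n-i$ with $0\le i\le5$ and of constants, to obtain a polynomial $P(n)$ with the same sign as $E(n)$. Substitute $n=4+x$ and aim to show that $-P(4+x)$ has nonnegative coefficients and a positive constant term, which gives $E(n)<0$ for all $n\ge4$. Should a few coefficients have the wrong sign, the fallback is to check the small values of $n$ directly and run the substitution argument from a larger base point.

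Since the setup is immediate from Lemma~\ref{lem:four-leading-coefficients}, the main obstacle I expect is this final step: the bookkeeping of the $j=2,3$ counts and the verification that the resulting degree-$\sim\!8$ polynomial has a definite sign on $n\ge4$. That step is also where $n=3$ must fail, consistent with the hypothesis $n\ge4$.
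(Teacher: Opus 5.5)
Your proposal is correct and follows the paper's proof. You apply Lemma~\ref{lem:four-leading-coefficients}, compute $e_1,e_2,e_3$ explicitly, and show that $e_1^2e_2+3e_1e_3-4e_2^2<0$ via the shift $n=4+u$. The paper obtains the $e_j$ from the exponential formula, which is equivalent to your block-type count, and it first divides out the root $x=0$, which is immaterial. Your prediction of a degree-$\sim 8$ polynomial is off, because the top-degree terms cancel: the expression equals $-\frac{(n-2)P(n)}{50(3n-5)(3n-4)(3n-2)^2(3n-1)^3}$ with $P$ a quartic, and $P(4+u)$ has all coefficients positive, so your fallback is not needed.
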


\proof 
For $n\ge 4$, write
$a_{n,\,j}=\stirlingsubset{n}{n-j}^{(3)}$ for $0\le j\le3$.
By the exponential formula for the $3$-associated Stirling subset numbers, we have
\begin{align}
a_{n,\,j}
=
\frac{(3n-2j)!}{(n-j)!}
[z^j]\left(
\sum_{\ell\ge0}\frac{z^\ell}{(\ell+3)!}
\right)^{n-j}.
\label{eq:S3-leading-extraction}
\end{align}
Since $\stirlingsubset{n}{0}^{(3)}=0$ and $\stirlingsubset{n}{1}^{(3)}=1$, the polynomial
\begin{align}
f_n(x)=\frac{s_{3,\,n}(x)}{a_{n,\,0}x}
\label{eq:f_n}
\end{align}
is monic of degree $n-1$ with positive coefficients.
For $j=1,2,3$, let $e_j=a_{n,\,j}/a_{n,\,0}$ be the
coefficient of $x^{n-1-j}$ in $f_n(x)$.
Extracting coefficients in \eqref{eq:S3-leading-extraction}
gives
\begin{align*}
e_1
&=\frac{n-1}{2(3n-1)},\\
e_2
&=\frac{(n-2)(5n-7)}
        {40(3n-2)(3n-1)},\\
e_3
&=\frac{(n-3)(n-2)(5n^2-21n+20)}
        {80(3n-5)(3n-4)(3n-2)(3n-1)}.
\end{align*}
Note that these expressions can also be obtained from the recurrence~\eqref{rec.subset}.
It follows that
\begin{align*}
e_1^2e_2+3e_1e_3-4e_2^2
=
-\frac{(n-2)P(n)}
{50(3n-5)(3n-4)(3n-2)^2(3n-1)^3},
\end{align*}
where
\[
P(n)=36n^4-222n^3+416n^2-277n+45.
\]
Writing $u=n-4\ge0$, we get
\[
P(n)
=
36u^4+354u^3+1208u^2+1611u+601>0.
\]
Therefore, for
$n\ge4$, 
\begin{align}
e_1^2e_2+3e_1e_3-4e_2^2<0.
\label{eq:contra}
\end{align}

Suppose, to the contrary,  that $s_{3,\,n}(x)$ is real-rooted.
Then $f_n(x)$ is also real-rooted. Since $f_n(x)$ has positive
coefficients, it cannot have any positive real roots.
In view of Lemma~\ref{lem:four-leading-coefficients}, we infer that 
\begin{align*}
e_1^2e_2+3e_1e_3-4e_2^2\geq 0,
\end{align*}
a contradiction to \eqref{eq:contra}.
Thus $s_{3,\,n}(x)$ is not real-rooted.
\qed

An alternative proof of Theorem~\ref{theorem:S3-nonreal} uses the
Hermite--Sylvester criterion~\cite[Theorem~1]{Nathanson}.
For $n\ge4$, let $\lambda_1,\ldots,\lambda_{n-1}$ be the zeros of $f_n(x)$ defined in~\eqref{eq:f_n} and set
\[ p_j=\sum_{\ell=1}^{n-1}\lambda_\ell^j.\]
Consider the principal minor
\begin{align*}
\det\begin{pmatrix}
p_2 & p_3\\
p_3 & p_4
\end{pmatrix},
\end{align*}
which, after the substitution $u=n-4$,  can be expressed as
\begin{align*}
-\frac{2R(u)}
{875(3u+5)(3u+7)^2(3u+8)^2(3u+10)^3(3u+11)^4},
\end{align*}
 where $R(u)$ is a polynomial with   positive
coefficients, including a positive constant term. 
The details of the computations are suppressed. So this principal
minor is negative for every $n\ge4$, implying that 
  $s_{3,\,n}(x)$ is not real-rooted for $n\geq 4$.

\section{An open question}
\label{subsec:row-log-concavity-conjectures}

Computations suggest that there may be a pattern in the log-concavity and log-convexity of the rows of the higher-order Stirling cycle and subset triangles.  We conclude with the following conjecture.

\begin{conjecture}
\label{conj:single-negative-interval}
Let $T=C^{(r)}$ with $r\ge6$, or $T=S^{(r)}$ with $r\ge7$.
For every $n\ge3$, there exist integers
$2\le\ell_n\le u_n\le n-1$ such that the three sequences
\[
 (T_{n,\,k})_{k=1}^{\ell_n},
 \qquad
 (T_{n,\,k})_{k=\ell_n-1}^{u_n+1},
 \qquad
 (T_{n,\,k})_{k=u_n}^{n}
\]
are strictly log-concave, strictly log-convex,
and strictly log-concave, respectively.
\end{conjecture}

\noindent{\textbf{ Declaration of the Use of AI.}} 
The rational functions $\lambda_{n,\,k}^{\mathrm C}$
in~\eqref{eq:C5-Lambda} and
$\lambda_{n,\,k}^{\mathrm S}$
in~\eqref{eq:S6-lambda} were found by ChatGPT 5.6 given the requirements that $\lambda_{n,\,k}\ge \rho_{n,\,k}$ and that the resulting stronger inequalities can be justified by induction.

\end{document}